\newtheorem{theorem}{Theorem}[section]
\newtheorem{lemma}[theorem]{Lemma}
\newtheorem{proposition}[theorem]{Proposition}
\newtheorem{corollary}[theorem]{Corollary}
\newtheorem{definition}[theorem]{Definition}
\begin{document}
	
\title[Ordered normed spaces of functions of bounded variation]{Ordered normed spaces of functions of bounded variation}
\author{Amit Kumar}
	
\address{Discipline of Mathematics, School of Basic Sciences, Indian Institute of Technology Bhubaneswar, Argul, Bhubaneswar, Pin - 752050, Odisha (State), India.}

\email{\textcolor[rgb]{0.00,0.00,0.84}{amit231291280895@gmail.com} and \textcolor[rgb]{0.00,0.00,0.84}{ak90@iitbbs.ac.in}}

\subjclass[2010]{Primary 46B40; Secondary 46L05, 46L30.}
	
\keywords{Vector lattice, dedekind property, absolutely ordered space, functions of bounded variation, norm, absolute order unit space, $AM$-space, order completeness and completeness.}

\begin{abstract}
In this paper, we define and study the space of all the functions of bounded variation $f:[x,y]\to \mathbb{Y}$ denoted by $\mathcal{BV}[x,y],$ where $[x,y]$ is an ordered interval and $\mathbb{Y}$ is an absolute order unit space having vector lattice structure. By default, under the order structure of $\mathbb{Y},$ the space $\mathcal{BV}[x,y]$ forms a nearer absolute order unit space structure and in some cases it turns out to be an absolute order unit space (in fact, a unital $AM$-space). By help of variation function, we also define a different kind of order structure on the space $\mathcal{BV}[x,y]$ that also makes  $\mathcal{BV}[x,y]$ a nearer absolute order unit space structure. Later, we also show that under certain conditions this ordering induces a complete norm on $\mathcal{BV}[x,y].$ 
\end{abstract}

\thanks{The author was financially supported by the Institute Post-doctoral Fellowship of IIT Bhubaneswar, India.}

\maketitle

\section{Introduction}
The theory of functions of bounded variation is well known in Mathematical Analysis. Functions of bounded variation are also called $\mathcal{BV}$-functions. In Complex Analysis, $\mathcal{BV}$-functions are used to defined arc-length of smooth curves. In other words, if $\gamma:[0,1]\to \mathbb{C}$ is a continuously differentiable function, then $\gamma$ is a $\mathcal{BV}$-function and the total variation of $\gamma$ is given by $\mathcal{V}(\gamma)=\displaystyle\int_{0}^{1}\vert \gamma'(t)\vert dt.$ In 1881, Camille Jordan initated the theory of $\mathcal{BV}$-functions of a single variable to deal with the convergence in fourier series \cite{CJFS}. On the other hand, the theory of $\mathcal{BV}$-functions of several variables was initated by Leonida Tonelli in 1926 (see \cite{LCLD}). However, $\mathcal{BV}$-functions of several variables were formally defined and studied by Lamberto Cesari in 1936 \cite{LSSF}. The $\mathcal{BV}$-functions forms an algebra of discontinuous functions having first order derivative almost everywhere. This is a major importance of $\mathcal{BV}$-functions in Mathematics, Physics and Engineering as it helps to define a generalized solutions of of non-linear problems that involves functionals, ordinary and partial differential equations. It is worth to notice that triangle inequality plays a crucial role in the study of the $\mathcal{BV}$-functions. The triangle inequality holds in $\mathbb{R}$ and $\mathbb{C}$ that is why it is possible to study $\mathcal{BV}$-functions in these spaces. For more informations about $\mathcal{BV}$-functions, we refer to see \cite{JBCF, RLWA} and references therein.

Order structure is one of important parts of the $C^*$-algebras. It characterizes $C^*$-algebras. Its fundamental importances can be found in \cite{B06, RVK, Kad51, RJ83, Kak, GKP} and references therein. Parallel theory of order structure has also been developed in vector spaces, for details see \cite{MA71, CDOB, J72, HHS74, WN73}. Being inspired by the richness of order structure, Karn also started working on the order theoretic aspects of $C^*$-algebras. Some of his related works can be seen in \cite{K10, K14, K16, K18, K19}.

In \cite{K18}, Karn introduced and studied the notion of absolutely ordered spaces and absolute order unit spaces. Under the condition \cite[Theorem 4.12]{K16}(triangle inequality holds), absolutely ordered spaces turn out to be vector lattices and under the same condition absolute order unit spaces turn out to be unital $AM$-spaces. That was the reason, Karn named ``absolutely ordered spaces" as ``non-commutative vector lattice models". Therefore, it is obvious question for the study of $\mathcal{BV}$-functions in absolutely ordered spaces. In this paper, we have defined and studied the notion of $\mathcal{BV}$-functions in absolutely ordered spaces. Finally, our aim is to show that $\mathcal{BV}$-functions forms ordered normed spaces.

The development of the paper is as follows. In the second section, we recall the preliminaries which are essential to write this paper. In the third section, we define $\mathcal{BV}$-functions and study their basic properties (Theorems \ref{6} and \ref{5}, and Lemma \ref{4}). We investigate when the space of $\mathcal{BV}$-functions forms an absolute order unit space and $AM$-space (Theorem \ref{7}). In the fourth section (last section), we define variation function for $\mathcal{BV}$-functions and study its basic properties in terms of $\mathcal{BV}$-functions (Theorem \ref{13}). We also construct some norms on the space of $\mathcal{BV}$-functions under which it turn out to be ordered normed spaces (Theorem \ref{7}, Corollaries \ref{15} and \ref{16}). Under the order completeness, one of these norms turns out to be a complete norm (Theorem \ref{3}).  

\section{Preliminaries}
Let $\mathbb{X}$ be a real vector space. A non-empty subset $\mathbb{X}^+$ of $\mathbb{X}$ is said to be a cone, if $x+y$ and $\alpha x\in \mathbb{X}^+$ for all $x,y\in \mathbb{X}^+$ and $\alpha\in \mathbb{R}^+\cup \lbrace 0\rbrace.$ Then $(\mathbb{X}, \mathbb{X}^+)$ is said to be a \emph{real ordered vector space}. Given a partial ordered space $(\mathbb{X}, \leq),$ put $\mathbb{X}^+=\lbrace x \in \mathbb{X}:x\geq 0\rbrace.$ Then $x \leq y$ if $y - x \in \mathbb{X}^+.$ In this way, $\leq $ is unique with the following properties: $x \le x$ for all $x\in \mathbb{X},$  $x \leq z$ provided $ x \leq y$ and $y \leq z$ and, $x + z \leq y + z$ and $\alpha x \leq \alpha y$ provided $x \leq y$, $z \in \mathbb{X}$ and $\alpha\in \mathbb{R}^+.$ If $\mathbb{X}^+ \cap - \mathbb{X}^+ = \{ 0 \},$ then the cone $\mathbb{X}^+$ is called \emph{proper} and if $\mathbb{X} = \mathbb{X}^+ - \mathbb{X}^+,$ then it is called \emph{generating}. It is worth to note that $\mathbb{X}^+$ is proper if and only if $\leq$ is anti-symmetric. 

An element $e\in \mathbb{X}^+$ is called order unit for $\mathbb{X}$ provided for every $x\in \mathbb{X},$ we have $\epsilon e\pm x\in \mathbb{X}^+$ for some $\epsilon > 0.$ The cone $\mathbb{X}^+$ is called \emph{Archimedean} provided for $x\in \mathbb{X}$ and a fixed $y\in \mathbb{X}^+$ such that $\epsilon y+x\in \mathbb{X}^+$ for all $\epsilon > 0,$ it turns out that $x\in \mathbb{X}^+.$

In a real ordered vector space $(\mathbb{X}, \mathbb{X}^+)$ with order unit $e$ and such that $\mathbb{X}^+$ is proper and Archimedean, we can always define a norm on $\mathbb{X}$ in the following way: $$\Vert x\Vert := \inf \lbrace \epsilon > 0: \epsilon e \pm x \in \mathbb{X}^+ \rbrace.$$ This is called norm determined by $e.$ Moreover, $\mathbb{X}^+$ is norm-closed as well as $\Vert x\Vert e\pm x \in \mathbb{X}^+$ for every $x\in \mathbb{X}.$ In this case, $\mathbb{X}$ is called an \emph{order unit space} and we denote it by $(\mathbb{X}, e).$

Let $\mathbb{X}$ be a real ordered vector space and $\mathbb{S}$ be a non-empty subset of $\mathbb{X}.$ Then $\mathbb{S}$ is called bounded above in $\mathbb{X}$ if there exists $z\in \mathbb{X}$ such that $x\leq z$ for all $x\in \mathbb{S}.$ In this case, we say that $\mathbb{S}$ is bounded above by $z$ and $z$ is called upper bound of $\mathbb{S}.$ Similarly, $\mathbb{S}$ is called bounded below in $\mathbb{X}$ if there exists $w\in \mathbb{X}$ such that $w\leq x$ for all $x\in \mathbb{X}.$ In this case, we say that $\mathbb{S}$ is bounded below by $w$ and $w$ is called lower bound of $\mathbb{S}.$ We say that $z \in \mathbb{X}$ is supremum of $\mathbb{S}$ if $z$ is upper bound of $\mathbb{S}$ and whenever $w\in \mathbb{X}$ is any other upper bound of $\mathbb{S},$ it turns out that $z\leq w.$ In this case, we write: $\sup\lbrace x:x\in \mathbb{S}\rbrace=z.$ Similarly, we say that $w \in \mathbb{X}$ is infimum of $\mathbb{S}$ if $w$ is lower bound of $\mathbb{S}$ and whenever $z$ is any other lower bound of $\mathbb{S},$ it turns out that $z\leq w.$ In this case, we write: $\inf\lbrace x:x\in \mathbb{S}\rbrace=w.$ Note that $\sup\lbrace x:x\in \mathbb{S}\rbrace$ exists in $\mathbb{X}$ if and only if $\inf \lbrace -x:x\in \mathbb{S}\rbrace$ exists in $\mathbb{X}.$ In this case, $\sup\lbrace x:x\in \mathbb{S}\rbrace=-\inf\lbrace -x:x\in \mathbb{S}\rbrace.$

A real ordered vector space $\mathbb{X}$ is called \emph{vector lattice} provided  $\sup\lbrace x, y\rbrace$ exists in $\mathbb{X}$ for every pair $x$ and $y\in \mathbb{X}.$ In a vector lattice, we write: $x\vee y=\sup \lbrace x,y\rbrace,x\wedge y=\inf \lbrace x,y\rbrace$ and $\vert x\vert=x\vee (-x).$ 

A vector lattice $\mathbb{X}$ is called Dedekind complete if supremum of every non-empty bounded above subset of $\mathbb{X}$ exists in $\mathbb{X}.$

Let $(\mathbb{X},\mathbb{X}^+)$ be a vector lattice with a norm $\Vert \cdot \Vert$ such that $(\mathbb{X},\Vert \cdot \Vert)$ forms a Banach space. Then $(\mathbb{X},\mathbb{X}^+)$ called a \emph{$AM$-space} provided the following two conditions hold:
\begin{enumerate}
\item[(1)] $\vert x \vert \leq \vert y \vert$ implies $\Vert x \Vert \leq \Vert y\Vert$ for every pair $x,y \in \mathbb{X}.$
\item[(2)] For $x,y \in \mathbb{X}^+,$ we have $\Vert x \vee y\Vert = \textrm{max} \lbrace \Vert x\Vert, \Vert y\Vert \rbrace.$ 
\end{enumerate}

Let's recall the notion of absolutely ordered spaces introduced by Karn as a possible non-commutative model for vector lattices \cite{K18}.

\begin{definition} \cite[Definition 3.4]{K18}\label{30}
Let $(\mathbb{X}, \mathbb{X}^+)$ be a real ordered vector space and let $\vert\cdot\vert: \mathbb{X} \to \mathbb{X}^+$ be a mapping satisfying the following conditions:               
     \begin{enumerate}
          \item[(a)] $\vert x \vert = x$ if $x \in \mathbb{X}^+.$
          \item[(b)] $\vert x \vert \pm x \in \mathbb{X}^+$ for all $x \in \mathbb{X}.$
          \item[(c)] $\vert \alpha \cdot x \vert = \vert \alpha \vert \cdot \vert x \vert$ for all $x \in \mathbb{X}$ and $\alpha \in \mathbb{R}.$
          \item[(d)] If $x, y$ and $z \in \mathbb{X}$ with $\vert x - y \vert = x + y$ and $0 \leq z \leq y,$ then $\vert x - z \vert = x + z.$
          \item[(e)] If $x, y$ and $z \in \mathbb{X}$ with $\vert x - y \vert = x + y$ and $\vert x - z \vert = x + z,$ then $\vert x - \vert y \pm z \vert \vert = x + \vert y \pm z \vert.$ 
     \end{enumerate}   
     
Then $(\mathbb{X}, \mathbb{X}^+, \vert \cdot \vert)$ is said to be an \emph{absolutely ordered space}.   
\end{definition}  

The following result explains that absolutely ordered space is very near to a lattice structure that is why Karn called it possible non-commutative model for vector lattices.

\begin{theorem}\label{2}
Let $(\mathbb{X},\mathbb{X}^+,\vert \cdot\vert)$ be an absolutely ordered space. For $y,z\in \mathbb{X},$ put $$y \dot{\vee} z:=\frac{1}{2}(y+z+\vert y-z\vert).$$ Then the following statements are equivalent:
\begin{enumerate}
\item[(1)] $y \dot{\vee} z=\sup \lbrace y,z\rbrace$ for all $y,z\in \mathbb{X}.$
\item[(2)] $\dot{\vee}$ is associative in $\mathbb{X}.$
\item[(3)] $\pm y\leq x$ implies $\vert y\vert \leq x$ for all $x,y\in \mathbb{X}.$
\item[(4)] $\vert y+z\vert \leq \vert y\vert +\vert z\vert.$
\end{enumerate}
\end{theorem}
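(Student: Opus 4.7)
My plan is to prove the four equivalences by establishing $(1) \Rightarrow (2) \Rightarrow (1)$ and $(1) \Rightarrow (3) \Rightarrow (4) \Rightarrow (3) \Rightarrow (1)$. The implication $(1) \Rightarrow (2)$ is free, since supremum is associative in any lattice. For $(1) \Rightarrow (3)$, axiom (c) gives $y \dot{\vee} (-y) = \tfrac12(y + (-y) + |2y|) = |y|$, so assuming (1) identifies $|y|$ with $\sup\{y,-y\}$, whence $\pm y \leq x$ forces $|y| \leq x$. The implication $(3) \Rightarrow (4)$ is then immediate: axiom (b) gives $\pm y \leq |y|$ and $\pm z \leq |z|$, so $\pm(y+z) \leq |y|+|z|$, and (3) yields the triangle inequality.

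The step I expect to be the main obstacle is $(4) \Rightarrow (3)$, which I would handle with the decomposition $2y = (x+y) - (x-y)$. Given $\pm y \leq x$, both $x+y$ and $x-y$ lie in $\mathbb{X}^+$, so axiom (a) forces $|x+y| = x+y$ and $|x-y| = x-y$. Applying axiom (c) and the hypothesized triangle inequality (4) then gives
\[
|y| \;=\; \tfrac12 \bigl|(x+y) - (x-y)\bigr| \;\leq\; \tfrac12\bigl(|x+y| + |x-y|\bigr) \;=\; x,
\]
which is exactly (3). For the return implication $(3) \Rightarrow (1)$, axiom (b) directly yields $y \dot{\vee} z \geq y$ and $y \dot{\vee} z \geq z$. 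If $y, z \leq w$, then a short calculation gives $\pm(y-z) \leq 2w - y - z$, so (3) implies $|y-z| \leq 2w - y - z$, which rearranges to $y \dot{\vee} z \leq w$, confirming that $y \dot{\vee} z = \sup\{y,z\}$.

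Finally, for $(2) \Rightarrow (1)$, I would observe that $\dot{\vee}$ is automatically commutative (by axiom (c) applied to $-1$) and, since axiom (c) with $\alpha = 0$ yields $|0| = 0$, also idempotent. Combined with the hypothesized associativity, this makes $(\mathbb{X}, \dot{\vee})$ a join-semilattice with associated partial order $y \sqsubseteq z \Leftrightarrow y \dot{\vee} z = z$. Unfolding the definition of $\dot{\vee}$, the equation $y \dot{\vee} z = z$ is equivalent to $|y-z| = z-y$; since $|y-z| \in \mathbb{X}^+$, this forces $z-y \in \mathbb{X}^+$, i.e.\ $y \leq z$ in the original order, and the reverse implication follows from axioms (a) and (c) via $|y-z| = |{-(z-y)}| = z-y$. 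Hence $\sqsubseteq$ coincides with $\leq$, so $\dot{\vee}$ gives the supremum for $\leq$, establishing (1).
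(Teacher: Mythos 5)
Your proposal is correct. Note that the paper itself gives no proof of Theorem \ref{2}: it is recalled as a known result of Karn (cf.\ \cite[Theorem 4.12]{K16} and \cite{K18}), so there is no in-paper argument to compare against, and what you have written is a complete, self-contained verification. All the individual steps check out: the identity $y \dot{\vee} (-y) = \vert y\vert$ via axiom (c) for $(1)\Rightarrow(3)$; the decomposition $2y = (x+y)-(x-y)$ together with axioms (a) and (c) for $(4)\Rightarrow(3)$; the upper-bound computation $\pm(y-z)\leq 2w-y-z$ for $(3)\Rightarrow(1)$; and, for the only genuinely delicate implication $(2)\Rightarrow(1)$, the observation that commutativity and idempotence of $\dot{\vee}$ are automatic (from $\vert -u\vert=\vert u\vert$ and $\vert 0\vert=0$, both consequences of axiom (c)), so that associativity makes $(\mathbb{X},\dot{\vee})$ a join-semilattice whose induced order $y\sqsubseteq z \Leftrightarrow y\dot{\vee}z=z \Leftrightarrow \vert y-z\vert = z-y$ coincides with $\leq$ by axioms (a) and (c) and the positivity of $\vert y-z\vert$. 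It is worth remarking that your argument uses only axioms (a)--(c) of Definition \ref{30}, never (d) or (e), which is consistent with the statement and makes the proof slightly more general than the ambient definition requires.
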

     
Next, we recall some variants of orthogonalities in absolutely ordered spaces.     
     
\begin{definition}[\cite{K18}, Definition 3.6]
	Let $(\mathbb{X}, \mathbb{X}^+, \vert\cdot\vert)$ be an absolutely ordered space and let $\| \cdot \|$ be a norm on $\mathbb{X}.$ 
	\begin{enumerate} 
		\item[(a)] For $x, y \in \mathbb{X}^+$, we say that $x$ is \emph{orthogonal} to $y$ ($x \perp y$) if, $\vert x - y \vert = x + y.$ Put $x^+ := \frac{1}{2}(\vert x \vert + x)$ and $x^- := \frac{1}{2}(\vert x \vert - x).$ In this case, $x = x^+ - x^-$ and $\vert x \vert = x^+ + x^-$ so that $x^+ \perp x^-.$ This decomposition turns out to be unique in the sense: $x = x_1 - x_2$ such that $x_1 \perp x_2$ implies $x_1 = x^+$ and $x_2 = x^-.$ Therefore each element in $\mathbb{X}$ owns a unique orthogonal decomposition in $\mathbb{X}^+.$
		\item[(b)] For $x, y \in \mathbb{X}^+$, we say that $x$ is \emph{$\infty$-orthogonal} to $y$ ($x \perp_\infty y$) if, $\Vert \alpha x + \beta y\Vert = \max \lbrace \Vert \alpha u \Vert, \Vert \beta v \Vert \rbrace$ for all $\alpha, \beta \in \mathbb{R}.$
		\item[(c)] For $x, y \in \mathbb{X}^+$, we say that $x$ is \emph{absolutely $\infty$-orthogonal} to $y$ ($x \perp_\infty^a y$) if, $x_1 \perp_\infty y_1$ whenever $0 \leq x_1 \leq x$ and $0 \leq y_1 \leq y.$
	\end{enumerate}
\end{definition}
 
Now, we recall absolute order unit spaces. 

\begin{definition}[\cite{K18}, Definition 3.8]
Let $(\mathbb{X}, \mathbb{X}^+, \vert \cdot \vert)$ be an absolutely ordered space and let $\Vert \cdot \Vert$ be an order unit norm on $\mathbb{X}$ determined by the order unit $e$ such that $\mathbb{X}^+$ is $\Vert \cdot \Vert$-closed. Then $(\mathbb{X}, \mathbb{X}^+, \vert \cdot \vert, e)$ is called an \emph{absolute order unit space} if $\perp = \perp^a_\infty$ on $\mathbb{X}^+.$
\end{definition}

Note that the self-adjoint part of a unital C$^*$-algebra is an absolute order unit space \cite[Remark 3.9(1)]{K18}. More generally, every unital $JB$-algebra is also an absolute order unit space.     
     
\section{Functions of bounded variation and their properties}

Let $\mathbb{X}$ be a real ordered vector space and $x,y \in \mathbb{X}$ such that $x\leq y,$ the ordered interval $[x,y]$ in $\mathbb{X}$ is defined by $[x,y]=\lbrace z \in \mathbb{X}: x\leq z \leq y\rbrace.$

A partition $\mathcal{P}$ of $[x,y]$ is collection of points in $[x,y]$ such that $\mathcal{P}=\lbrace x=x_0 < x_1 < x_2 < \cdots < x_{n_\mathcal{P} - 1} < x_{n_{\mathcal{P}}}=y\rbrace.$

Let $\mathbb{Y}$ be an absolutely ordered space and $f:[x,y]\to \mathbb{Y}$ be a function. Let $\mathcal{P}$ be a partion of $[x,y].$ We consider the following summation over $\mathcal{P}:$ $$\displaystyle \sum_{i=1}^{n_{\mathcal{P}}} \vert f(x_i)-f(x_{i-1})\vert.$$ We denote it by $\Sigma^{x,y}_\mathcal{P}[f].$ Most of the times, we denote $\Sigma^{x,y}_\mathcal{P}[f]$ by $\Sigma_\mathcal{P}[f]$ if there is no ambuigity.

\begin{proposition}\label{10}
Let $\mathbb{Y}$ be a vector lattice structure and $\mathcal{P}_1$ and $\mathcal{P}_2$ be partitions of $[x,y]$ such that $\mathcal{P}_1 \subset \mathcal{P}_2,$ then $\Sigma_{\mathcal{P}_1}[f]\leq \Sigma_{\mathcal{P}_2}[f].$ In particular, $\vert f(y)-f(x)\vert \leq \Sigma_\mathcal{P}[f]$ for every partition $\mathcal{P}$ of $[x,y].$ 

\begin{proof}
Let $\mathcal{P}_1=\lbrace x=x_0 < x_1 < x_2 < \cdots < x_{n-1} < x_n=y\rbrace.$ Without loss of generality, we assume that $\mathcal{P}_2$ contains exactly one more point than $P_1.$ In this case, we have $\mathcal{P}_2=\lbrace x=x_0 < x_1 < x_2 < \cdots < x_{i-1} < z < x_i < \cdots < x_{n-1} < x_n=y\rbrace.$ By Theorem \ref{2}, we get that

\begin{eqnarray*}
\vert f(x_i)-f(x_{i-1})\vert & \leq & \vert (f(x_i)-f(z))+(f(z)-f(x_{i-1})) \vert \\
&\leq & \vert f(x_i)-f(z) \vert + \vert f(z)-f(x_{i-1}) \vert
\end{eqnarray*}
 
so that $\Sigma_{\mathcal{P}_1}[f]\leq \Sigma_{\mathcal{P}_2}[f].$

\end{proof}

\end{proposition}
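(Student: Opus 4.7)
The plan is to reduce the general refinement inequality to the case of adding a single point, which is handled by the triangle inequality available in any vector lattice. Since $\mathbb{Y}$ has a vector lattice structure, Theorem \ref{2} applies and in particular condition (4) gives us $|u+v| \leq |u| + |v|$ for all $u, v \in \mathbb{Y}$. This is precisely the tool needed.

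First, I would induct on $|\mathcal{P}_2 \setminus \mathcal{P}_1|$, so it suffices to treat the case where $\mathcal{P}_2$ is obtained from $\mathcal{P}_1$ by inserting a single point $z$ with $x_{i-1} < z < x_i$. Writing
\[
f(x_i) - f(x_{i-1}) = \bigl(f(x_i) - f(z)\bigr) + \bigl(f(z) - f(x_{i-1})\bigr),
\]
the triangle inequality from Theorem \ref{2}(4) yields
\[
|f(x_i) - f(x_{i-1})| \leq |f(x_i) - f(z)| + |f(z) - f(x_{i-1})|.
\]
Adding the untouched summands $|f(x_j) - f(x_{j-1})|$ for $j \neq i$ to both sides gives $\Sigma_{\mathcal{P}_1}[f] \leq \Sigma_{\mathcal{P}_2}[f]$, and iterating completes the induction.

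For the ``in particular'' claim, I would take the trivial partition $\mathcal{P}_1 = \{x, y\}$, for which $\Sigma_{\mathcal{P}_1}[f] = |f(y) - f(x)|$. Since $\mathcal{P}_1 \subset \mathcal{P}$ for every partition $\mathcal{P}$ of $[x,y]$, the first part of the proposition yields $|f(y) - f(x)| \leq \Sigma_\mathcal{P}[f]$.

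There is no serious obstacle here; the only subtlety is making sure that additivity of $-$ across a midpoint combines cleanly with the triangle inequality for $|\cdot|$. Both are guaranteed by the vector lattice hypothesis through Theorem \ref{2}, so the argument is essentially the classical real-valued proof transported verbatim into the ordered setting.
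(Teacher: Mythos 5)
Your proof is correct and follows essentially the same route as the paper's: reduce to inserting a single point, split $f(x_i)-f(x_{i-1})$ across the new point $z$, and apply the triangle inequality supplied by Theorem \ref{2} in the vector lattice setting. Your explicit induction on $|\mathcal{P}_2 \setminus \mathcal{P}_1|$ and the use of the trivial partition $\{x,y\}$ for the ``in particular'' claim merely make precise what the paper leaves implicit.
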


Now, we introduce the notion of functions of bounded variation in absolutely ordered spaces.

\begin{definition}
Let $\mathbb{Y}$ be an absolutely ordered space and $f:[x,y] \to \mathbb{Y}$ be a function. Then $f$ is said to be of bounded variation, if \begin{center} $\sup \left \lbrace \sum_{\mathcal{P}}[f] :\textrm{$\mathcal{P}$ is a partition of $[x,y]$} \right \rbrace$ \end{center} exists in $\mathbb{Y}.$ If $f$ is of bounded variation, we say \begin{center} $\mathcal{V}(f,x,y)=\sup \left \lbrace \sum_{\mathcal{P}}[f]:\textrm{$\mathcal{P}$ is a partition of $[x,y]$} \right \rbrace$\end{center} the total variation of $f$ and we also write: $$\mathcal{BV}[x,y]=\lbrace f:[x,y]\to \mathbb{Y}~\textrm{is a function of bounded variation}~\rbrace.$$ Most of the times, we denote $\mathcal{V}(f,x,y)$ and $\mathcal{BV}[x,y]$ by $\mathcal{V}(f)$ and $\mathcal{BV}$ if there is no ambiguity. 
\end{definition}

The following result is immediate from Proposition \ref{10}.

\begin{corollary}\label{8}
Let $f\in \mathcal{BV}[x,y].$ Then $\vert f(y)-f(x)\vert \leq \mathcal{V}(f).$
\end{corollary}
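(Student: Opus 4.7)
The plan is short, because the corollary is essentially a direct unpacking of Proposition \ref{10} combined with the definition of total variation. First I would invoke the \emph{in particular} clause of Proposition \ref{10}, which already asserts that $\vert f(y)-f(x)\vert \leq \Sigma_\mathcal{P}[f]$ for every partition $\mathcal{P}$ of $[x,y]$. The cleanest route is to use the trivial partition $\mathcal{P}_0 = \{x, y\}$ (so $n_{\mathcal{P}_0}=1$), for which $\Sigma_{\mathcal{P}_0}[f]$ is by inspection equal to $\vert f(y)-f(x)\vert$; one can alternatively appeal to Proposition \ref{10} by noting $\mathcal{P}_0 \subseteq \mathcal{P}$ for any partition $\mathcal{P}$, but this is a detour.

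Next, since $f\in \mathcal{BV}[x,y]$, by definition the supremum $\mathcal{V}(f)$ of $\{\Sigma_\mathcal{P}[f] : \mathcal{P}\text{ a partition of }[x,y]\}$ exists in $\mathbb{Y}$ and is in particular an upper bound for this set. Hence $\Sigma_{\mathcal{P}_0}[f] \leq \mathcal{V}(f)$, which upon substituting the identity of the previous paragraph yields
\[
\vert f(y)-f(x)\vert \;=\; \Sigma_{\mathcal{P}_0}[f] \;\leq\; \mathcal{V}(f),
\]
as desired.

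There is essentially no obstacle here — Proposition \ref{10} does all the real work, and the corollary just observes that the supremum of the variations dominates the value at the trivial partition. The only subtlety worth flagging explicitly is that $\{x, y\}$ genuinely qualifies as a partition under the definition given at the start of the section (i.e.\ a strictly increasing finite sequence starting at $x$ and ending at $y$), so that $\Sigma_{\mathcal{P}_0}[f]$ really does lie in the set over which $\mathcal{V}(f)$ is taken; this is immediate.
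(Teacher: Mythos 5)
Your proof is correct and matches the paper, which offers no written argument at all beyond declaring the corollary ``immediate from Proposition \ref{10}''; your unpacking via the trivial partition $\mathcal{P}_0=\{x,y\}$ and the definition of $\mathcal{V}(f)$ as a supremum is exactly the intended reasoning. If anything, your direct route through $\mathcal{P}_0$ is slightly cleaner than invoking the ``in particular'' clause of Proposition \ref{10}, since it needs only that the supremum dominates one element of the set and not the vector-lattice hypothesis under which that proposition is stated.
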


Next, we study algebra of functions of bounded variations.

\begin{theorem}\label{6}
Let $\mathbb{Y}$ be a vector lattice and $f,g\in \mathcal{BV}[x,y].$ Then 
\begin{enumerate}
\item[(1)] $f$ is bounded.
\item[(2)] $\alpha f$ is also of bounded variation with $\mathcal{V}(\alpha f)=\vert \alpha \vert \mathcal{V}(f)$ for any $\alpha \in \mathbb{R}$. In particular, $-f$ is also of bounded variation with $\mathcal{V}(-f)=\mathcal{V}(f).$
\end{enumerate}
Moreover, if $\mathbb{Y}$ is dedekind complete, then 
\begin{enumerate}
\item[(3)]$f\pm g$ is also of bounded variation with $\mathcal{V}(f \pm g) \leq \mathcal{V}(f)+\mathcal{V}(g).$
\item[(4)]$\vert f\vert$ is also of bounded variation and $\mathcal{V}(\vert f\vert)\leq \mathcal{V}(f).$
\end{enumerate}
\end{theorem}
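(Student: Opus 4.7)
The plan is to handle (1)--(4) in order, using Proposition \ref{10} and the axioms of Definition \ref{30} for all termwise estimates, with Dedekind completeness invoked only at the end of (3) and (4) to produce suprema of sets that are already visibly bounded above.

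For (1), given $z\in[x,y]$, I apply Proposition \ref{10} to the partition $\mathcal{P}=\{x,z,y\}$ (with trivial adjustment if $z\in\{x,y\}$). Since each summand of $\Sigma_\mathcal{P}[f]$ lies in $\mathbb{Y}^+$, the individual term satisfies $\vert f(z)-f(x)\vert\leq\Sigma_\mathcal{P}[f]\leq\mathcal{V}(f)$, and axiom (b) of Definition \ref{30} then gives $\pm(f(z)-f(x))\leq\mathcal{V}(f)$, yielding the uniform order bound $f(x)-\mathcal{V}(f)\leq f(z)\leq f(x)+\mathcal{V}(f)$. For (2), axiom (c) of Definition \ref{30} gives $\Sigma_\mathcal{P}[\alpha f]=\vert\alpha\vert\Sigma_\mathcal{P}[f]$ for every partition, so the supremum defining $\mathcal{V}(\alpha f)$ exists and equals $\vert\alpha\vert\mathcal{V}(f)$; the case $\alpha=0$ is immediate, and the case $\alpha=-1$ specializes to $\mathcal{V}(-f)=\mathcal{V}(f)$.

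For (3), I invoke the triangle inequality $\vert u+v\vert\leq\vert u\vert+\vert v\vert$, which holds by Theorem \ref{2}(4) since the vector lattice hypothesis on $\mathbb{Y}$ activates all four equivalent conditions of that theorem. Regrouping the increment $(f+g)(x_i)-(f+g)(x_{i-1})$ and summing over $i$ yields $\Sigma_\mathcal{P}[f+g]\leq\Sigma_\mathcal{P}[f]+\Sigma_\mathcal{P}[g]\leq\mathcal{V}(f)+\mathcal{V}(g)$, so the collection of partition sums for $f+g$ is bounded above, and Dedekind completeness produces its supremum. The case of $f-g$ then follows by combining this with (2). For (4), I first derive the reverse triangle inequality $\vert\,\vert u\vert-\vert v\vert\,\vert\leq\vert u-v\vert$: the triangle inequality gives both $\vert u\vert-\vert v\vert\leq\vert u-v\vert$ and $\vert v\vert-\vert u\vert\leq\vert u-v\vert$, and Theorem \ref{2}(3) upgrades these two one-sided estimates to the claimed inequality. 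Termwise application then gives $\Sigma_\mathcal{P}[\vert f\vert]\leq\Sigma_\mathcal{P}[f]\leq\mathcal{V}(f)$, and Dedekind completeness again produces the supremum.

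The only genuinely nontrivial step is the appeal to Theorem \ref{2} to convert the vector lattice hypothesis on $\mathbb{Y}$ into usable triangle and reverse triangle inequalities for $\vert\cdot\vert$; once these are in hand, parts (3) and (4) are essentially formal, since the sets of partition sums to be controlled are bounded above by the known quantities $\mathcal{V}(f)+\mathcal{V}(g)$ and $\mathcal{V}(f)$, and only the Dedekind property is needed to upgrade ``bounded above'' to ``has a supremum.''
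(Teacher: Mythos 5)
Your proposal is correct and follows essentially the same route as the paper: part (1) via the three-point partition $\{x,z,y\}$ and the triangle inequality (the paper bounds $\vert f(z)\vert$ by $\mathcal{V}(f)+\vert f(x)\vert$ rather than sandwiching $f(z)$ in an order interval, a cosmetic difference), part (2) by homogeneity of $\vert\cdot\vert$, and parts (3) and (4) by the termwise triangle and reverse triangle inequalities from Theorem \ref{2}, with Dedekind completeness supplying the suprema. No gaps.
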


\begin{proof}
\begin{enumerate}
\item[(1)] Let $f\in \mathcal{BV}[x,y].$ For $z \in [x,y],$ we have  
\begin{eqnarray*}
\vert f(z) \vert &=& \vert (f(z)-f(x))+f(x))\vert \\
&\leq & \vert f(z)-f(x) \vert + \vert f(x))\vert \\
&\leq & \vert f(y)-f(z) \vert + \vert f(z)-f(x) \vert + \vert f(x))\vert \\
&\leq & \mathcal{V}(f)+\vert f(x) \vert.
\end{eqnarray*}

Hence $f$ is bounded as $\mathcal{V}(f)+\vert f(x) \vert$ is a fix element in $\mathbb{Y}.$ 
\item[(2)] For any $\alpha \in \mathbb{R}$ and partion $\mathcal{P}=\lbrace x=x_0 < x_1 < x_2 < \cdots < x_{n_{\mathcal{P}}-1} < x_{n_\mathcal{P}}=y\rbrace,$ we have $$\displaystyle \sum_{i=1}^{n_{\mathcal{P}}} \vert \alpha f(x_i)- \alpha f(x_{i-1})\vert = \vert \alpha \vert \displaystyle \sum_{i=1}^{n_{\mathcal{P}}} \vert f(x_i)-f(x_{i-1})\vert.$$ Thus $\alpha f$ is also of bounded variation with $\mathcal{V}(\alpha f)=\vert \alpha \vert \mathcal{V}(f).$ 
\item[(3)] Let $g\in \mathcal{BV}[x,y].$ By Theorem \ref{2}, we get that
\begin{eqnarray*}
\displaystyle \sum_{i=1}^{n_\mathcal{P}} \vert (f+g)(x_i)-(f+g)(x_{i-1})\vert &=& \displaystyle \sum_{i=1}^{n_{\mathcal{P}}} \vert (f(x_i)-f(x_{i-1}))+(g(x_i)-g(x_{i-1}))\vert \\
&\leq & \displaystyle \sum_{i=1}^{n_{\mathcal{P}}} (\vert f(x_i)-f(x_{i-1}) \vert + \vert g(x_i)-g(x_{i-1})\vert )\\
& = & \displaystyle \sum_{i=1}^{n_{\mathcal{P}}} \vert f(x_i)-f(x_{i-1}) \vert + \displaystyle \sum_{i=1}^{n_{\mathcal{P}}} \vert g(x_i)-g(x_{i-1})\vert \\
& \leq & \mathcal{V}(f) + \mathcal{V}(g). 
\end{eqnarray*}

Thus $f+g$ is of bounded variation with $\mathcal{V}(f+g) \leq \mathcal{V}(f) + \mathcal{V}(g).$ 

Next, $g$ is of bounded variation, by (2), we get that $-g$ is also of bounded variation with $\mathcal{V}(-g)=\mathcal{V}(g).$ Since $f$ and $-g$ are of bounded variation, we get that $f-g=f+(-g)$ is also of bounded variation with $\mathcal{V}(f-g) \leq \mathcal{V}(f)+\mathcal{V}(-g)=\mathcal{V}(f)+\mathcal{V}(g).$
\item[(4)] For any $i,$ we have 

\begin{eqnarray*}
\vert f(x_i)\vert &=& \vert f(x_i)-f(x_{i-1})+f(x_{i-1})\vert \\
&\leq & \vert f(x_i)-f(x_{i-1}\vert + \vert f(x_{i-1})\vert
\end{eqnarray*}

so that $$\vert f(x_i)\vert - \vert f(x_{i-1})\vert \leq \vert f(x_i)-f(x_{i-1})\vert.$$ Interchanging $x_i$ by $x_{i-1},$ we also get that $\vert f(x_{i-1})\vert - \vert f(x_i)\vert \leq \vert f(x_{i-1})-f(x_i)\vert.$ Finally, we get that $\pm (\vert f(x_i)\vert - \vert f(x_{i-1})\vert) \leq \vert f(x_i)-f(x_{i-1})\vert.$ By Theorem \ref{2}, we have $\vert \vert f(x_i)\vert - \vert f(x_{i-1})\vert \vert \leq \vert f(x_i)-f(x_{i-1})\vert.$

Since $f$ is a function of bounded variation and $\displaystyle \sum_{i=1}^{n_{\mathcal{P}}} \vert \vert f(x_i)\vert - \vert f(x_{i-1})\vert \vert \leq \displaystyle \sum_{i=1}^{n_{\mathcal{P}}} \vert f(x_i)-f(x_{i-1})\vert$ for any partition $\mathcal{P}$ of $[x,y],$ we conclude that $\vert f \vert$ is also of bounded variation and $\mathcal{V}(\vert f\vert)\leq \mathcal{V}(f).$
\end{enumerate}
\end{proof}

The following result tells that every monotone function turns out to be a function of bounded variation.

\begin{proposition}\label{1}
Let $\mathbb{Y}$ be an absolutely ordered space and $f:[x,y] \to \mathbb{Y}$ is monotone. Then $f\in \mathcal{BV}$ with $\mathcal{V}(f)=\vert f(y)-f(x) \vert.$

\begin{proof}
Let $f:[x,y] \to \mathbb{Y}$ be monotonically increasing and let $P=\lbrace x=x_0 < x_1 < x_2 < \cdots < x_{n-1} < x_{n_\mathcal{P}}=y\rbrace$ be a partition of $[x,y].$ Then $$f(x_i)-f(x_{i-1}) \geq 0~\textrm{for all}~i$$ so that 

\begin{eqnarray*}
\displaystyle \sum_{i=1}^{n_\mathcal{P}} \vert f(x_i)-f(x_{i-1})\vert &=& \displaystyle \sum_{i=1}^{n_\mathcal{P}} (f(x_i)-f(x_{i-1})) \\
&=& f(x_{n_\mathcal{P}})-f(x_0) \\
&=& f(y)-f(x).
\end{eqnarray*}

For any partition $P,$ we get that $\sum_\mathcal{P}[f]= f(y)-f(x).$ Hence $f\in \mathcal{BV}$ and $\mathcal{V}(f)=f(y)-f(x).$ 

Next, if $f$ is monotonically decreasing, then $-f$ is monotonically increasing and so $-f\in \mathcal{BV}[x,y]$ with $\mathcal{V}(-f)=f(x)-f(y).$ Consequently, by Theorem \ref{6}(2), $f\in \mathcal{BV}[x,y]$ with $\mathcal{V}(f)=\mathcal{V}(-f)=f(x)-f(y)=-(f(y)-f(x)).$ Finally, we conclude that every monotone function $f$ is of bounded variation with $\mathcal{V}(f)=\vert f(y)-f(x)\vert.$ 
\end{proof}

\end{proposition}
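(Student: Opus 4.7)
The plan is to reduce the whole statement to a single telescoping calculation and exploit the fact that, on the positive cone, the absolute value does nothing. I would start with the monotonically increasing case. For any partition $\mathcal{P}=\{x=x_0<x_1<\cdots<x_{n_\mathcal{P}}=y\}$, monotonicity gives each increment $f(x_i)-f(x_{i-1})\in\mathbb{Y}^+$, so property (a) of Definition \ref{30} yields $\vert f(x_i)-f(x_{i-1})\vert=f(x_i)-f(x_{i-1})$. The sum then collapses:
\[
\Sigma_\mathcal{P}[f]=\sum_{i=1}^{n_\mathcal{P}}\bigl(f(x_i)-f(x_{i-1})\bigr)=f(y)-f(x).
\]
Hence the set $\{\Sigma_\mathcal{P}[f]:\mathcal{P}\text{ a partition of }[x,y]\}$ is the singleton $\{f(y)-f(x)\}$, so its supremum exists trivially in $\mathbb{Y}$ and $\mathcal{V}(f)=f(y)-f(x)$. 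Since $f(x)\leq f(y)$, property (a) once more gives $\vert f(y)-f(x)\vert=f(y)-f(x)$, completing this case.

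For the monotonically decreasing case, I would avoid invoking Theorem \ref{6}(2) (whose ambient hypothesis is vector lattice structure, not just absolutely ordered) and repeat the same telescoping directly. Here $f(x_{i-1})-f(x_i)\in\mathbb{Y}^+$, and property (c) with $\alpha=-1$ gives
\[
\vert f(x_i)-f(x_{i-1})\vert=\bigl\vert -(f(x_{i-1})-f(x_i))\bigr\vert=f(x_{i-1})-f(x_i).
\]
Summing again yields the constant value $\Sigma_\mathcal{P}[f]=f(x)-f(y)$ for every partition, so $\mathcal{V}(f)=f(x)-f(y)$, which equals $\vert f(y)-f(x)\vert$ because $f(y)-f(x)\in -\mathbb{Y}^+$.

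I do not anticipate a genuine obstacle. The argument leans only on the two easiest axioms of Definition \ref{30}, namely (a) to strip the absolute value off positive differences and (c) to reverse a sign, together with the elementary observation that a telescoping sum of signed differences is independent of the partition. Nothing is needed from the triangle inequality of Theorem \ref{2}, from Dedekind completeness, or from any $\perp$-orthogonality structure, so the result stands in the full generality of an absolutely ordered space as stated.
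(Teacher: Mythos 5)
Your proof is correct, and the core of it --- stripping the absolute value off positive increments via axiom (a) of Definition \ref{30} and telescoping to get the partition-independent value $f(y)-f(x)$ --- is exactly the paper's argument for the increasing case. Where you diverge is the decreasing case: the paper passes to $-f$ and invokes Theorem \ref{6}(2) to transfer bounded variation and the value of $\mathcal{V}$ back to $f$, whereas you rerun the telescoping directly, using axiom (c) with $\alpha=-1$ to turn each $\vert f(x_i)-f(x_{i-1})\vert$ into $f(x_{i-1})-f(x_i)$. Your variant is slightly preferable on logical hygiene: Theorem \ref{6} carries the ambient hypothesis that $\mathbb{Y}$ is a vector lattice, while Proposition \ref{1} assumes only that $\mathbb{Y}$ is an absolutely ordered space, so the paper's citation is, strictly read, out of scope (the particular part (2) of Theorem \ref{6} uses only axiom (c), so no actual damage is done, but one has to inspect its proof to see that). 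Your route makes the proposition self-contained within Definition \ref{30} and makes it manifest that neither the triangle inequality of Theorem \ref{2} nor Dedekind completeness is needed, which is worth being explicit about since the surrounding results do impose those extra hypotheses. The only point to state carefully, which you do, is that the supremum defining $\mathcal{V}(f)$ exists because the set of partition sums is a singleton.
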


The notion of $\vert \cdot \vert$-prserving maps between absolutely ordered spaces has been introduced and studied by Karn and the author in \cite{K19}. The next result describes that every $\vert \cdot \vert$-prserving map is a function of bounded variation.

\begin{corollary}
Let $\mathbb{X}$ and $\mathbb{Y}$ be absolutely ordered spaces, and $f:\mathbb{X} \to \mathbb{Y}$ be an $\vert \cdot \vert$-preserving map. Then $f:[x,y] \to \mathbb{Y}$ is of bounded variation with $\mathcal{V}(f)= f(y)-f(x)$ for any $x,y \in \mathbb{X}$ with $x < y.$
\end{corollary}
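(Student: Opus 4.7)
The plan is to reduce this corollary to Proposition \ref{1} by showing that every $\vert\cdot\vert$-preserving map is automatically monotone on the ordered interval $[x,y]$. The definition of $\vert\cdot\vert$-preserving in \cite{K19} requires that $f$ is (at least) linear and satisfies $f(\vert u\vert) = \vert f(u)\vert$ for all $u\in \mathbb{X}$; I would first cite this definition and extract these two features, since they are all that the argument needs.

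Next I would argue that $f$ sends the ordered interval $[x,y]$ into a monotonically ordered family. The key observation is: if $u\leq v$ in $\mathbb{X}$, then $v-u\in \mathbb{X}^+$, so by Definition \ref{30}(a) we have $\vert v-u\vert = v-u$. Applying the $\vert\cdot\vert$-preserving property together with linearity gives
\[
f(v) - f(u) \;=\; f(v-u) \;=\; f(\vert v-u\vert) \;=\; \vert f(v-u)\vert \;\in\; \mathbb{Y}^+,
\]
so $f(u)\leq f(v)$. Therefore the restriction $f\colon [x,y]\to \mathbb{Y}$ is monotonically increasing.

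Finally, I would invoke Proposition \ref{1}: since $f$ is monotone on $[x,y]$, it belongs to $\mathcal{BV}[x,y]$ with total variation $\vert f(y)-f(x)\vert$. Because $x< y$ forces $f(y)-f(x)\in \mathbb{Y}^+$ by the step above, Definition \ref{30}(a) again gives $\vert f(y)-f(x)\vert = f(y)-f(x)$, yielding $\mathcal{V}(f) = f(y)-f(x)$, as required.

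The only subtle point is ensuring that the definition of $\vert\cdot\vert$-preserving from \cite{K19} indeed includes linearity (or at least additivity on ordered differences); once this is confirmed the proof is essentially a one-line appeal to Proposition \ref{1}. No Dedekind completeness or vector lattice hypothesis on $\mathbb{Y}$ is required here, since the supremum in the definition of $\mathcal{V}(f)$ is attained rather than merely existing as a least upper bound of an infinite collection.
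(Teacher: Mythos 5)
Your proposal is correct and follows essentially the same route as the paper: both use linearity together with $f(\vert v-u\vert)=\vert f(v-u)\vert$ and Definition \ref{30}(a) to show $f$ is monotonically increasing, and then invoke Proposition \ref{1} to get $\mathcal{V}(f)=\vert f(y)-f(x)\vert=f(y)-f(x)$. Your added remarks (that linearity must be part of the definition from \cite{K19}, and that no Dedekind completeness is needed) are accurate but do not change the argument.
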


\begin{proof}
Assume that $f:\mathbb{X} \to \mathbb{Y}$ be an $\vert \cdot \vert$-preserving map. Let $z,w \in \mathbb{X}$ with $z < w.$ Then $f(w)-f(z)=f(w-z)=f(\vert w-z \vert)=\vert f(w-z) \vert \geq 0$ so that $f(w) \geq f(z).$ Thus $f:[x,y] \to \mathbb{Y}$ is a monotonically increasing for any $x,y \in \mathbb{X}$ with $x < y.$ By Proposition \ref{1}, we get that $f\in \mathcal{BV}[x,y]$ with $\mathcal{V}(f)=\vert f(y)-f(x)\vert = f(y)-f(x).$
\end{proof}

Now, we prove one of the main theorem of this paper which elaborates that every function of bounded variation remains function of bounded variation on sub-intervals.

\begin{theorem}\label{5}
Let $\mathbb{Y}$ be a vector lattice which is dedekind complete and $x \leq z \leq y.$ Then $f\in \mathcal{BV}[x,y]$ if and only if $f\in \mathcal{BV}[x,z]$ and $f\in \mathcal{BV}[z,y].$ In this case, $\mathcal{V}(f,x,y)=\mathcal{V}(f,x,z)+\mathcal{V}(f,z,y).$
\end{theorem}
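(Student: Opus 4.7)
The plan is to exploit the elementary observation that if $\mathcal{P}_1$ is any partition of $[x,z]$ and $\mathcal{P}_2$ is any partition of $[z,y]$, then $\mathcal{P}_1\cup \mathcal{P}_2$ is a partition of $[x,y]$ whose $\Sigma$-sum splits as $\Sigma_{\mathcal{P}_1\cup \mathcal{P}_2}[f]=\Sigma_{\mathcal{P}_1}[f]+\Sigma_{\mathcal{P}_2}[f]$ directly from the definition. Throughout, the Dedekind completeness of $\mathbb{Y}$ will be used to produce suprema of sets shown to be bounded above, and Proposition \ref{10} will be used to pass to refinements without decreasing the variation sums.

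For the forward direction, assuming $f\in \mathcal{BV}[x,y]$, I would fix a single partition $\mathcal{P}_2$ of $[z,y]$; then for every partition $\mathcal{P}_1$ of $[x,z]$ the inequality $\Sigma_{\mathcal{P}_1}[f]\leq \mathcal{V}(f,x,y)-\Sigma_{\mathcal{P}_2}[f]$ shows that $\{\Sigma_{\mathcal{P}_1}[f]\}$ is bounded above in $\mathbb{Y}$, so $\mathcal{V}(f,x,z)$ exists by Dedekind completeness; the argument for $\mathcal{V}(f,z,y)$ is symmetric. Conversely, if both sub-variations exist, I would take an arbitrary partition $\mathcal{P}$ of $[x,y]$ and pass to the refinement $\mathcal{P}':=\mathcal{P}\cup \{z\}$. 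By Proposition \ref{10}, $\Sigma_{\mathcal{P}}[f]\leq \Sigma_{\mathcal{P}'}[f]$, and since $\mathcal{P}'$ decomposes cleanly into partitions $\mathcal{P}'_1$ of $[x,z]$ and $\mathcal{P}'_2$ of $[z,y]$, this yields $\Sigma_{\mathcal{P}}[f]\leq \mathcal{V}(f,x,z)+\mathcal{V}(f,z,y)$; Dedekind completeness then delivers $\mathcal{V}(f,x,y)$ and hence $f\in \mathcal{BV}[x,y]$.

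For the additivity identity, I would combine the two bounds already established. The inequality $\mathcal{V}(f,x,y)\leq \mathcal{V}(f,x,z)+\mathcal{V}(f,z,y)$ falls out at once from the refinement estimate above by taking the supremum over $\mathcal{P}$. For the reverse inequality I would start from $\Sigma_{\mathcal{P}_1}[f]+\Sigma_{\mathcal{P}_2}[f]\leq \mathcal{V}(f,x,y)$, first take the supremum over $\mathcal{P}_1$ with $\mathcal{P}_2$ fixed to obtain $\mathcal{V}(f,x,z)+\Sigma_{\mathcal{P}_2}[f]\leq \mathcal{V}(f,x,y)$, and then take the supremum over $\mathcal{P}_2$ to conclude.

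The main subtlety I anticipate is the ``sup of a translated set'' step used twice in taking the final supremum: in an ordered vector space (unlike the scalar case), this is not $\varepsilon$-chasing but relies on translation invariance of the order together with the universal property of the supremum, giving $\sup(A+b)=\sup A+b$ whenever $\sup A$ exists, and then iterating. This is the only point where the vector lattice and Dedekind completeness hypotheses really combine; everything else is a mechanical bookkeeping exercise with partitions.
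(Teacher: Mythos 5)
Your proposal is correct and follows essentially the same route as the paper: both directions rest on splitting/merging partitions at $z$, the refinement $\mathcal{P}\cup\{z\}$ via Proposition \ref{10}, and the two resulting inequalities combining to give additivity. If anything, you are more careful than the paper at the one delicate point, namely the iterated supremum $\sup(A+b)=\sup A+b$ needed to pass from $\Sigma_{\mathcal{P}_1}[f]+\Sigma_{\mathcal{P}_2}[f]\leq \mathcal{V}(f,x,y)$ to $\mathcal{V}(f,x,z)+\mathcal{V}(f,z,y)\leq \mathcal{V}(f,x,y)$, which the paper performs implicitly.
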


\begin{proof}
First assume that $f:[x,y] \to \mathbb{Y}$ is a function of bounded variation. Let $\mathcal{P}_1$ and $\mathcal{P}_2$ be partitions of $[x,z]$ and $[z,y]$ respectively. Then $\mathcal{P}=\mathcal{P}_1 \cup \mathcal{P}_2$ is partition of $[x,y].$ We have \begin{center} $\sum_{\mathcal{P}_1}[f] + \sum_{\mathcal{P}_2}[f] = \sum_\mathcal{P}[f] \leq V(f,x,y)$ \end{center} so that 
\begin{center}$\sum_{\mathcal{P}_1}[f] \leq V(f,x,y)$ and $\sum_{\mathcal{P}_2}[f]\leq V(f,x,y).$\end{center} Thus $f\in \mathcal{BV}[x,z]$ and $f\in \mathcal{BV}[z,y]$ with $\mathcal{V}(f,x,z) + \mathcal{V}(f,z,y) \leq \mathcal{V}(f,x,y).$

Conversely assume that $f\in \mathcal{BV}[x,z]$ and $f\in \mathcal{BV}[z,y].$ Let $\mathcal{P}$ be a partition of $[x,y].$ Put $\mathcal{P}^*=\mathcal{P} \cup \lbrace z \rbrace.$ Then $\mathcal{P}^*$ is also a partition of $[x,y]$ and there exist $\mathcal{P}_1$ and $\mathcal{P}_2$ partitions of $[x,z]$ and $[z,y]$ respectively such that $\mathcal{P}^*=\mathcal{P}_1 \cup \mathcal{P}_2.$ By Proposition \ref{10}, we have 

\begin{center}
$\sum_\mathcal{P}[f] \leq  \sum_{\mathcal{P}^*}[f] = \sum_{\mathcal{P}_1}[f] + \sum_{\mathcal{P}_2}[f]\leq \mathcal{V}(f,x,z)+\mathcal{V}(f,z,y).$
\end{center}

Thus $f\in \mathcal{BV}[x,y]$ with $\mathcal{V}(f,x,y) \leq \mathcal{V}(f,x,z)+\mathcal{V}(f,z,y).$ Hence, in this case, we get that $\mathcal{V}(f,x,y) = \mathcal{V}(f,x,z)+\mathcal{V}(f,z,y).$
\end{proof}

Next result characterize all the functions of bounded variation with zero total variation.

\begin{lemma}\label{4}
Let $f\in \mathcal{BV}[x,y].$ Then $f$ is constant if and only if $\mathcal{V}(f)=0.$
\end{lemma}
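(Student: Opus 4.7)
The forward implication will be essentially a one-line calculation: if $f\equiv c$ on $[x,y]$, then for every partition $\mathcal{P}$, condition (a) of Definition \ref{30} applied at $0\in\mathbb{Y}^+$ gives $\vert f(x_i)-f(x_{i-1})\vert=\vert 0\vert=0$ for each $i$, so $\Sigma_\mathcal{P}[f]=0$ for every $\mathcal{P}$, and hence $\mathcal{V}(f)=0$.

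For the converse, the main idea is to probe $f$ at an arbitrary interior point using a three-point partition. I would fix $z\in(x,y)$ and apply the partition $\mathcal{P}_z:=\{x,z,y\}$ to get
\[0\le \vert f(z)-f(x)\vert+\vert f(y)-f(z)\vert=\Sigma_{\mathcal{P}_z}[f]\le \mathcal{V}(f)=0,\]
where the left inequality uses that $\vert\cdot\vert$ is $\mathbb{Y}^+$-valued and $\mathbb{Y}^+$ is closed under addition. Properness of $\mathbb{Y}^+$ then forces the sum to vanish, and, since a sum of two elements of $\mathbb{Y}^+$ can equal $0$ only when each summand does, I will obtain $\vert f(z)-f(x)\vert=0$. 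A final appeal to condition (b) of Definition \ref{30} gives $\pm (f(z)-f(x))\in\mathbb{Y}^+$, and one more use of properness yields $f(z)=f(x)$. The endpoint cases $z\in\{x,y\}$ are handled trivially via the partition $\{x,y\}$ (or are tautological).

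The main obstacle, more a bookkeeping subtlety than a genuine difficulty, is the repeated reliance on properness of $\mathbb{Y}^+$: once to collapse a nonnegative sum bounded above by $0$ into its vanishing summands, and again to deduce $w=0$ from $\vert w\vert=0$. Properness is a standing convention for absolutely ordered spaces (without it, condition (b) of Definition \ref{30} would be vacuous), so I would either cite it at the outset or absorb it silently, and I expect the author's proof to proceed in the same way, very likely leaving this point implicit.
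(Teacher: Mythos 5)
Your proof is correct and takes essentially the same route as the paper: the forward direction is the identical one-line computation, and for the converse the paper obtains $0\leq \vert f(z)-f(x)\vert \leq \mathcal{V}(f)=0$ for each $z\in[x,y]$ by citing Proposition \ref{10}, which is exactly your three-point-partition bound unpacked. As you anticipated, the paper leaves the two appeals to properness of $\mathbb{Y}^+$ (collapsing the nonnegative sum and deducing $f(z)=f(x)$ from $\vert f(z)-f(x)\vert=0$) entirely implicit.
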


\begin{proof}
Assume that $f$ is constant. For any partition $\mathcal{P}$ of $[x,y],$ we get that $\displaystyle \sum_{i=1}^{n_{\mathcal{P}}}\vert f(x_i)-f(x_{i-1})\vert=0$  so that $\mathcal{V}(f)=0.$ Conversely assume that $\mathcal{V}(f)=0.$ By Proposition \ref{10}, we have $0\leq \vert f(z)-f(x)\vert \leq \mathcal{V}(f)=0$ for all $z\in [x,y].$ Then $\vert f(z)-f(x)\vert =0$ for all $z\in [x,y].$ In this case, $f(z)=f(x)$ for all $z\in [x,y].$ Thus $f$ is a constant function.
\end{proof}

\section{Norms on functions of bounded variations}

In this section, we show that the collection of functions of bounded variation forms ordered normed spaces.

Let $\mathbb{Y}$ be an absolutely ordered space and $f:[x,y]\to \mathbb{Y}$ be a function. For any partition $\mathcal{P}$ of $[x,y],$ we write: $\Sigma_\mathcal{P}^+[f]=\displaystyle \sum_{i=1}^{n_{\mathcal{P}}}[f(x_i)-f(x_{i-1})]^+$ and $\Sigma_\mathcal{P}^-[f]=\displaystyle \sum_{i=1}^{n_{\mathcal{P}}}[f(x_i)-f(x_{i-1})]^-.$ If $\sup \left \lbrace \sum_{\mathcal{P}}^+[f] :\textrm{$\mathcal{P}$ is a partition of $[x,y]$} \right \rbrace$ and $\sup \left \lbrace \sum_{\mathcal{P}}^-[f] :\textrm{$\mathcal{P}$ is a partition of $[x,y]$} \right \rbrace$ exist, we write \begin{center} $\mathcal{V}^+(f)=\sup \left \lbrace \sum_{\mathcal{P}}^+[f] :\textrm{$\mathcal{P}$ is a partition of $[x,y]$} \right \rbrace$ \end{center} and \begin{center} $\mathcal{V}^-(f)=\sup \left \lbrace \sum_{\mathcal{P}}^-[f] :\textrm{$\mathcal{P}$ is a partition of $[x,y]$} \right \rbrace.$ \end{center}

\begin{proposition}\label{9}
Let $\mathbb{Y}$ be an absolutely ordered space and $f:[x,y]\to \mathbb{Y}$ be a function. Then
\begin{enumerate}
\item[(1)] For any partition $\mathcal{P}$ of $[x,y],$ we have: $\Sigma_{\mathcal{P}}[f]=\Sigma_\mathcal{P}^+[f] + \Sigma_\mathcal{P}^-[f]$ and $f(y)-f(x)=\Sigma_\mathcal{P}^+[f] - \Sigma_\mathcal{P}^-[f].$ 
\end{enumerate}
If $\mathbb{Y}$ is a vector lattice, then
\begin{enumerate}
\item[(2)] For any partitions $\mathcal{P}_1$ and $\mathcal{P}_1$ of $[x,y]$ such that $\mathcal{P}_1 \subset \mathcal{P}_2$ we have: $\sum_{\mathcal{P}_1}^\pm[f]\leq \sum_{\mathcal{P}_2}^\pm[f].$ 
\end{enumerate}
Moreover, if $\mathbb{Y}$ is vector lattice which is dedekind complete and $f\in \mathcal{BV}[x,y],$ then
\begin{enumerate}
\item[(3)] $\mathcal{V}(f)=\mathcal{V}^+(f) + \mathcal{V}^-(f).$
\end{enumerate}
\end{proposition}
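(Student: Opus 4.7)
Part (1) is a direct consequence of applying the orthogonal decomposition termwise. Setting $a_i := f(x_i)-f(x_{i-1})$, the definitions $a_i^{\pm} = \tfrac{1}{2}(|a_i| \pm a_i)$, which are available in any absolutely ordered space, give $a_i = a_i^+ - a_i^-$ and $|a_i| = a_i^+ + a_i^-$. Summing the first identity over $i$ produces a telescoping sum equal to $f(y)-f(x)$, while summing the second yields $\Sigma_{\mathcal{P}}[f] = \Sigma_{\mathcal{P}}^+[f]+\Sigma_{\mathcal{P}}^-[f]$.

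For part (2), I would mimic the argument of Proposition \ref{10}: it suffices to handle the case where $\mathcal{P}_2$ is obtained from $\mathcal{P}_1$ by inserting a single point $z \in (x_{i-1}, x_i)$, and then iterate. Writing $b := f(z)-f(x_{i-1})$ and $c := f(x_i)-f(z)$, so that $a_i = b+c$, the only real work is the sublinearity $(b+c)^+ \leq b^+ + c^+$ and $(b+c)^- \leq b^-+c^-$. In a vector lattice these are immediate: $b+c \leq b^+ + c^+$ and $0 \leq b^+ + c^+$ force $(b+c)^+ = (b+c)\vee 0 \leq b^+ + c^+$, and symmetrically for the negative part. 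Summing the termwise inequality across $i$ then gives $\Sigma_{\mathcal{P}_1}^{\pm}[f] \leq \Sigma_{\mathcal{P}_2}^{\pm}[f]$.

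For part (3) I would combine (1), (2), and a common-refinement argument. From (2) and (1) we have $\Sigma_{\mathcal{P}}^{\pm}[f] \leq \Sigma_{\mathcal{P}}[f] \leq \mathcal{V}(f)$, so Dedekind completeness of $\mathbb{Y}$ guarantees the suprema $\mathcal{V}^{\pm}(f)$ exist. The easy direction $\mathcal{V}(f) \leq \mathcal{V}^+(f)+\mathcal{V}^-(f)$ follows from (1): each $\Sigma_{\mathcal{P}}[f]=\Sigma_{\mathcal{P}}^+[f]+\Sigma_{\mathcal{P}}^-[f]\leq \mathcal{V}^+(f)+\mathcal{V}^-(f)$, and taking $\sup_{\mathcal{P}}$ is enough. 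For the opposite inequality, given partitions $\mathcal{P}_1, \mathcal{P}_2$, take the common refinement $\mathcal{P}_3 := \mathcal{P}_1 \cup \mathcal{P}_2$; combining (2) and (1) yields $\Sigma_{\mathcal{P}_1}^+[f]+\Sigma_{\mathcal{P}_2}^-[f] \leq \Sigma_{\mathcal{P}_3}^+[f]+\Sigma_{\mathcal{P}_3}^-[f]=\Sigma_{\mathcal{P}_3}[f]\leq \mathcal{V}(f)$. Fixing $\mathcal{P}_2$ and taking $\sup$ over $\mathcal{P}_1$ gives $\mathcal{V}^+(f)+\Sigma_{\mathcal{P}_2}^-[f] \leq \mathcal{V}(f)$, and a second $\sup$ over $\mathcal{P}_2$ delivers $\mathcal{V}^+(f)+\mathcal{V}^-(f) \leq \mathcal{V}(f)$.

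The main delicacy I anticipate is precisely this interchange of supremum and sum, since in a general ordered vector space $\sup_i(a_i+b_i) \neq \sup_i a_i + \sup_i b_i$. What saves the argument is the monotonicity supplied by (2), which makes $\{\Sigma_{\mathcal{P}}^{\pm}[f]\}_{\mathcal{P}}$ upward-directed nets under inclusion of partitions; the common-refinement trick then allows one to pass to the supremum in each factor independently and close the proof.
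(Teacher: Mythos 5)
Your proposal is correct and follows essentially the same route as the paper: termwise orthogonal decomposition plus telescoping for (1), reduction to a single inserted point and the sublinearity of $a\mapsto a^{\pm}$ for (2), and the common-refinement trick with two successive suprema for (3). The only (immaterial) difference is that in (2) the paper derives $(b+c)^{\pm}\leq b^{\pm}+c^{\pm}$ from the formula $a^{\pm}=\tfrac{1}{2}(\vert a\vert\pm a)$ and the triangle inequality of Theorem \ref{2}, whereas you use the lattice characterization $(b+c)^{+}=(b+c)\vee 0$; your explicit remark that Dedekind completeness is what guarantees $\mathcal{V}^{\pm}(f)$ exist is a point the paper leaves implicit.
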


\begin{proof}
\begin{enumerate}
\item[(1)] For any partion $\mathcal{P}$ of $[x,y],$ let $f(x_i)-f(x_{i-1})=[f(x_i)-f(x_{i-1})]^+ - [f(x_i)-f(x_{i-1})]^-$ be orthogonal decomposition of $f$ for the sub-interval $[x_{i-1},x_i]$ for each $i.$ Then $\vert f(x_i)-f(x_{i-1})\vert = [f(x_i)-f(x_{i-1})]^+ + [f(x_i)-f(x_{i-1})]^-$ and consequently, we have:
\begin{eqnarray*}
\Sigma_{\mathcal{P}}[f]^+ + \Sigma_{\mathcal{P}}[f]^- &=& \displaystyle \sum_{i=1}^{n_{\mathcal{P}}} [f(x_i)-f(x_{i-1})]^+ + \displaystyle \sum_{i=1}^{n_{\mathcal{P}}} [f(x_i)-f(x_{i-1})]^- \\ 
&=& \displaystyle \sum_{i=1}^{n_{\mathcal{P}}} \lbrace [f(x_i)-f(x_{i-1})]^+ + [f(x_i)-f(x_{i-1})]^- \rbrace \\
&=& \displaystyle \sum_{i=1}^{n_{\mathcal{P}}} \vert f(x_i)-f(x_{i-1})\vert \\
&=&  \Sigma_{\mathcal{P}}[f]
\end{eqnarray*}

and

\begin{eqnarray*}
\Sigma_{\mathcal{P}}[f]^+ - \Sigma_{\mathcal{P}}[f]^- &=& \displaystyle \sum_{i=1}^{n_{\mathcal{P}}} [f(x_i)-f(x_{i-1})]^+ - \displaystyle \sum_{i=1}^{n_{\mathcal{P}}} [f(x_i)-f(x_{i-1})]^- \\ 
&=& \displaystyle \sum_{i=1}^{n_{\mathcal{P}}} \lbrace [f(x_i)-f(x_{i-1})]^+ - [f(x_i)-f(x_{i-1})]^- \rbrace \\
&=& \displaystyle \sum_{i=1}^{n_{\mathcal{P}}} f(x_i)-f(x_{i-1}) \\
&=& f(y)-f(x).
\end{eqnarray*}
\end{enumerate}

\begin{enumerate}
\item[(2)] Let $\mathcal{P}_1=\lbrace x=x_0 < x_1 < x_2 < \cdots < x_{n-1} < x_n=y\rbrace.$ Without loss of generality, we assume that $\mathcal{P}_2$ contains exactly one more point than $P_1.$ In this case, we have $\mathcal{P}_2=\lbrace x=x_0 < x_1 < x_2 < \cdots < x_{i-1} < z < x_i < \cdots < x_{n-1} < x_n=y\rbrace.$ Then
\begin{eqnarray*}
(f(x_i)-f(x_{i-1}))^\pm &=& \frac{1}{2}(\vert f(x_i)-f(x_{i-1})\vert \pm (f(x_i)-f(x_{i-1}))) \\
&\leq & \frac{1}{2}(\vert f(z)-f(x_{i-1})\vert \pm (f(z)-f(x_{i-1}))) \\
&+& \frac{1}{2}(\vert f(x_i)-f(z)\vert \pm (f(x_i)-f(z))) \\
&=& (f(z)-f(x_{i-1}))^\pm + (f(x_i)-f(z))^\pm
\end{eqnarray*} 
so that $\sum_{\mathcal{P}_1}^\pm[f]\leq \sum_{\mathcal{P}_2}^\pm[f].$
\end{enumerate}

\begin{enumerate}
\item[(3)] By (1), we have $\Sigma_{\mathcal{P}}[f]=\Sigma_\mathcal{P}^+[f] + \Sigma_\mathcal{P}^-[f]$ for every partition $\mathcal{P}$ of $[x,y].$ Thus $\Sigma_{\mathcal{P}}[f] \leq \mathcal{V}^+(f) + \mathcal{V}^-(f)$ so that $\mathcal{V}(f) \leq \mathcal{V}^+(f) + \mathcal{V}^-(f).$ Let $\mathcal{P}_1$ and $\mathcal{P}_2$ be partitions of $[x,y].$ Put $\mathcal{P}=\mathcal{P}_1 \cup \mathcal{P}_2.$ By (1) and (2), we get that $\Sigma_{\mathcal{P}_1}^+[f]+\Sigma_{\mathcal{P}_2}^-[f]\leq \Sigma_\mathcal{P}^+[f]+\Sigma_\mathcal{P}^-[f] = \Sigma_\mathcal{P}[f].$ Then $\Sigma_{\mathcal{P}_1}^+[f]+\Sigma_{\mathcal{P}_2}^-[f]\leq \mathcal{V}(f)$ so that $\mathcal{V}^+(f) + \mathcal{V}^-(f) \leq \mathcal{V}(f).$ Hence $\mathcal{V}(f)=\mathcal{V}^+(f) + \mathcal{V}^-(f).$
\end{enumerate}
\end{proof}

Now, we define the notion of variation function corresponding to a function of bounded variation.

\begin{definition}
Let $\mathbb{Y}$ be a vector lattice which is dedekind complete and $f\in \mathcal{BV}.$ By Theorem $\ref{5},$ we define a function $\mathcal{V}_f:[x,y]\to \mathbb{Y}$ such that $\mathcal{V}_f(z)=\mathcal{V}(f,x,z)$ for each $z\in [x,y].$ We call this function the variation function of $f.$
\end{definition}

Let's study some properties of the variation function.

\begin{theorem}\label{13}
Let $\mathbb{Y}$ be a vector lattice which is dedekind complete and $f\in \mathcal{BV}.$ Then the following statements hold:
\begin{enumerate}
\item[(1)] $\mathcal{V}_f$ is monotonically increasing such that $\mathcal{V}_f(x)=0.$
\item[(2)] $\mathcal{V}_f(z) \geq \vert f(z)-f(x)\vert.$
\item[(3)] $\mathcal{V}_f = f$ for every monotonically increasing function $f$ such that $f(x)=0.$
\end{enumerate}
For $\mathcal{V}^\pm_f(z) = \frac{1}{2}(\mathcal{V}_f(z) \pm (f(z)-f(x))),$ we also have:
\begin{enumerate}
\item[(4)] $\mathcal{V}_f^\pm(z) \geq 0.$
\item[(5)] $\mathcal{V}_f^\pm$ are monotonically increasing.
\item[(6)] $\mathcal{V}_f = \mathcal{V}^+_f + \mathcal{V}^-_f.$
\item[(7)] $f = f(x) + \mathcal{V}^+_f - \mathcal{V}^-_f.$
\end{enumerate}

\end{theorem}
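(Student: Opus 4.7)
The approach is to treat the seven statements in the order given, reducing each to one of the basic tools already established, namely Theorem~\ref{5} (additivity of total variation over subdivided intervals), Corollary~\ref{8} (the estimate $|f(y)-f(x)|\le\mathcal{V}(f)$), Proposition~\ref{1} (variation of a monotone map), and axiom (b) of Definition~\ref{30} (which gives $\pm u\le|u|$ for every $u\in\mathbb{Y}$). Once (1)--(3) are established for $\mathcal{V}_f$, the statements (4)--(7) for $\mathcal{V}_f^{\pm}$ will follow rather mechanically from the definition $\mathcal{V}_f^{\pm}=\tfrac12(\mathcal{V}_f\pm(f-f(x)))$.

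For (1), I would first note that $\mathcal{V}_f(x)=\mathcal{V}(f,x,x)=0$ since $f$ restricted to $\{x\}$ is constant (Lemma~\ref{4}); alternatively, the only partition of $[x,x]$ is $\{x\}$ and the corresponding sum is empty. For monotonicity, take $x\le z_1\le z_2\le y$. Two applications of Theorem~\ref{5} first give $f\in\mathcal{BV}[x,z_2]$, and then splitting $[x,z_2]$ at $z_1$ yields
\[
\mathcal{V}_f(z_2)=\mathcal{V}(f,x,z_2)=\mathcal{V}(f,x,z_1)+\mathcal{V}(f,z_1,z_2)=\mathcal{V}_f(z_1)+\mathcal{V}(f,z_1,z_2),
\]
and the second summand is a supremum of sums in $\mathbb{Y}^+$, hence lies in $\mathbb{Y}^+$. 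Statement (2) is a direct instance of Corollary~\ref{8} on the interval $[x,z]$. Statement (3) follows from Proposition~\ref{1}: if $f$ is monotonically increasing with $f(x)=0$, then $\mathcal{V}_f(z)=|f(z)-f(x)|=|f(z)|=f(z)$, since $f(z)\ge f(x)=0$.

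For (4), axiom (b) of Definition~\ref{30} gives $\pm(f(z)-f(x))\le|f(z)-f(x)|$, and combining with (2) yields $\pm(f(z)-f(x))\le\mathcal{V}_f(z)$, which is precisely $\mathcal{V}_f^{\pm}(z)\ge 0$. For (5), take $z_1\le z_2$ in $[x,y]$; using (1) and Theorem~\ref{5} I compute
\[
\mathcal{V}_f^{\pm}(z_2)-\mathcal{V}_f^{\pm}(z_1)=\tfrac12\bigl(\mathcal{V}(f,z_1,z_2)\pm(f(z_2)-f(z_1))\bigr),
\]
and the right-hand side is non-negative by the same combination of Corollary~\ref{8} (applied on $[z_1,z_2]$) and $\pm u\le|u|$.

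Finally, (6) and (7) are purely algebraic: adding and subtracting the two identities $\mathcal{V}_f^{\pm}(z)=\tfrac12(\mathcal{V}_f(z)\pm(f(z)-f(x)))$ gives $\mathcal{V}_f^+(z)+\mathcal{V}_f^-(z)=\mathcal{V}_f(z)$ and $\mathcal{V}_f^+(z)-\mathcal{V}_f^-(z)=f(z)-f(x)$ respectively. The only mild obstacle is a bookkeeping one in step (1): one must verify that Theorem~\ref{5} can in fact be invoked on the nested intervals $[x,z_1]\subseteq[x,z_2]\subseteq[x,y]$, which requires applying it twice (first to deduce $f\in\mathcal{BV}[x,z_2]$ from $f\in\mathcal{BV}[x,y]$, then to split $[x,z_2]$ at $z_1$). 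Beyond that, every step is either a direct quotation of an earlier result or an immediate rearrangement of the definition of $\mathcal{V}_f^{\pm}$, so no deeper obstacle is anticipated.
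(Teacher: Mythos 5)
Your proposal is correct and follows essentially the same route as the paper's own proof: (1), (2), (5) via Theorem~\ref{5} and Corollary~\ref{8}, (3) via Proposition~\ref{1}, (4) via $\pm u\le|u|$ combined with (2), and (6)--(7) by direct algebra (which the paper simply calls routine). Your extra bookkeeping --- the degenerate interval giving $\mathcal{V}_f(x)=0$ and the double application of Theorem~\ref{5} to nested subintervals --- only makes explicit what the paper leaves implicit.
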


\begin{proof} It is routine to verify (6) and (7). Next, we prove other statements.
\begin{enumerate}
\item[(1)] By Theorem \ref{5}, we have $\mathcal{V}(f,x,z_2)=\mathcal{V}(f,x,z_1)+\mathcal{V}(f,z_1,z_2)$ for $z_2\geq z_1.$ Since $\mathcal{V}(f,z_1,z_2)\geq 0,$ we get that $\mathcal{V}_f(z_2)\geq \mathcal{V}_f(z_1)$ for $z_2\geq z_1.$ Thus $\mathcal{V}_f$ is monotonically increasing.
\item[(2)] By Theorem \ref{5}, we have $f\in \mathcal{BV}[x,z]$ for every $z \in [x,y].$ Now by Corollary \ref{8}, we get that $\mathcal{V}_f(z) \geq \vert f(z)-f(x)\vert.$
\item[(3)] Let $f$ be monotonically increasing and $f(x)=0.$ By Proposition \ref{1}, we get that $\mathcal{V}_f(z)=f(z)-f(x)=f(z)$ for all $z\in [x,y].$ Thus $\mathcal{V}_f=f.$
\item[(4)] For each $z\in [x,y],$ we have $\pm (f(z)-f(x)) \leq \vert f(z)-f(x)\vert \leq \mathcal{V}_f(z).$ Thus $0 \leq \mathcal{V}_f^\pm(z).$
\item[(5)] Let $z_1,z_2 \in [x,y]$ such that $z_2\geq z_1.$ Then $2[\mathcal{V}_f^\pm (z_2)-\mathcal{V}_f^\pm (z_1)]=\mathcal{V}_f(z_2)-\mathcal{V}_f(z_1)\pm [f(z_2)-f(z_1)]=\mathcal{V}(f,z_1,z_2)\pm [f(z_2)-f(z_1)]\geq 0.$ Thus $\mathcal{V}_f^\pm$ are monotonically increasing.
\end{enumerate}
\end{proof}

\begin{corollary}\label{14}
Let $\mathbb{Y}$ be a vector lattice which is dedekind complete and $f,g\in \mathcal{BV}.$ Then $\pm (\mathcal{V}_f-\mathcal{V}_g)\leq \vert \mathcal{V}_f-\mathcal{V}_g\vert \leq \mathcal{V}_{f\pm g}\leq \mathcal{V}_f+\mathcal{V}_g.$ In particular, we have:
\begin{enumerate} 
\item[(1)] $\pm (\mathcal{V}(f)-\mathcal{V}(g))\leq \vert \mathcal{V}(f)-\mathcal{V}(g)\vert \leq \mathcal{V}(f\pm g).$
\item[(2)] $\pm (\mathcal{V}_f-\mathcal{V}_g)\leq \vert \mathcal{V}_f-\mathcal{V}_g\vert\leq \mathcal{V}_{\mathcal{V}_f \pm \mathcal{V}_g}\leq \mathcal{V}_f+\mathcal{V}_g.$ 
\end{enumerate}
\end{corollary}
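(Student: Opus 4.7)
The plan is to verify the four-term chain of inequalities pointwise in $z\in[x,y]$ and then to deduce the two specializations by evaluating at $z=y$ and by feeding $\mathcal{V}_f,\mathcal{V}_g$ themselves back into the main chain.

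The leftmost inequality $\pm(\mathcal{V}_f-\mathcal{V}_g)\leq \vert \mathcal{V}_f-\mathcal{V}_g\vert$ is Definition \ref{30}(b) applied pointwise in $\mathbb{Y}$. The rightmost inequality $\mathcal{V}_{f\pm g}\leq \mathcal{V}_f+\mathcal{V}_g$ is immediate from Theorem \ref{6}(3) applied on the sub-interval $[x,z]$ for each $z$ (Theorem \ref{5} ensures $f,g\in\mathcal{BV}[x,z]$): it reads $\mathcal{V}_{f\pm g}(z)=\mathcal{V}(f\pm g,x,z)\leq \mathcal{V}(f,x,z)+\mathcal{V}(g,x,z)=\mathcal{V}_f(z)+\mathcal{V}_g(z)$.

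The heart of the argument is the middle inequality $\vert \mathcal{V}_f-\mathcal{V}_g\vert \leq \mathcal{V}_{f\pm g}$. I fix $z$ and decompose $f=(f\pm g)+(\mp g)$. Theorems \ref{5} and \ref{6}(2)--(3) on $[x,z]$ then give
\[
\mathcal{V}_f(z)=\mathcal{V}(f,x,z)\leq \mathcal{V}(f\pm g,x,z)+\mathcal{V}(\mp g,x,z)=\mathcal{V}_{f\pm g}(z)+\mathcal{V}_g(z),
\]
i.e.\ $\mathcal{V}_f(z)-\mathcal{V}_g(z)\leq \mathcal{V}_{f\pm g}(z)$. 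Interchanging $f$ and $g$ and using $\mathcal{V}(g\pm f)=\mathcal{V}(\pm(f\pm g))=\mathcal{V}(f\pm g)$ from Theorem \ref{6}(2) gives the reversed-sign inequality $\mathcal{V}_g(z)-\mathcal{V}_f(z)\leq \mathcal{V}_{f\pm g}(z)$. Combined, $\pm(\mathcal{V}_f(z)-\mathcal{V}_g(z))\leq \mathcal{V}_{f\pm g}(z)$, and Theorem \ref{2}(3) --- available because $\mathbb{Y}$ is a vector lattice --- upgrades this to $\vert \mathcal{V}_f(z)-\mathcal{V}_g(z)\vert \leq \mathcal{V}_{f\pm g}(z)$.

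For the particular cases, (1) is obtained by evaluating the main chain at $z=y$ and using $\mathcal{V}_h(y)=\mathcal{V}(h)$. For (2), apply the main chain with $f,g$ replaced by $\mathcal{V}_f,\mathcal{V}_g$ (which lie in $\mathcal{BV}[x,y]$ by Theorem \ref{13}(1) together with Proposition \ref{1}); since $\mathcal{V}_f,\mathcal{V}_g$ are monotonically increasing with $\mathcal{V}_f(x)=\mathcal{V}_g(x)=0$, Theorem \ref{13}(3) collapses $\mathcal{V}_{\mathcal{V}_f}$ to $\mathcal{V}_f$ and $\mathcal{V}_{\mathcal{V}_g}$ to $\mathcal{V}_g$, leaving the stated form. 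I do not anticipate any serious obstacle beyond careful bookkeeping of the two independent $\pm$ signs; the sole nontrivial analytic input is Theorem \ref{2}(3), which is precisely where the vector-lattice hypothesis on $\mathbb{Y}$ enters.
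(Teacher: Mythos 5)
Your proposal is correct and follows essentially the same route as the paper: the same decomposition $f=(f\pm g)+(\mp g)$ combined with Theorems \ref{5} and \ref{6} on $[x,z]$, the same interchange of $f$ and $g$ to get both signs, the upgrade to the modulus via the vector-lattice property, and the same specializations at $z=y$ and via Theorem \ref{13}(3) for the two particular cases. The only difference is that you make explicit a couple of small justifications (e.g.\ $\mathcal{V}(g\pm f)=\mathcal{V}(f\pm g)$ and the citation of Theorem \ref{2}(3)) that the paper leaves implicit.
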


\begin{proof}
By Theorems \ref{6} and \ref{5}, we have $\mathcal{V}_f(z)=\mathcal{V}(f,x,z)\leq \mathcal{V}(f\pm g,x,z)+\mathcal{V}(\mp g,x,z)=\mathcal{V}_{f\pm g}(z)+\mathcal{V}_g(z)\leq \mathcal{V}_f(z)+\mathcal{V}_g(z)+\mathcal{V}_g(z) .$ Then $\mathcal{V}_f(z)-\mathcal{V}_g(z)\leq \mathcal{V}_{f\pm g}(z)\leq \mathcal{V}_f(z)+\mathcal{V}_g(z).$ Interchanging $f$ and $g,$ we also have $\mathcal{V}_g(z)-\mathcal{V}_f(z)\leq \mathcal{V}_{f\pm g}(z)\leq \mathcal{V}_f(z)+\mathcal{V}_g(z).$ Thus $\pm (\mathcal{V}_f(z)-\mathcal{V}_g(z))\leq \mathcal{V}_{f\pm g}(z)\leq \mathcal{V}_f(z)+\mathcal{V}_g(z).$ Then $\pm (\mathcal{V}_f(z)-\mathcal{V}_g(z))\leq \vert \mathcal{V}_f(z)-\mathcal{V}_g(z)\vert \leq \mathcal{V}_{f\pm g}(z)\leq \mathcal{V}_f(z)+\mathcal{V}_g(z)$ so that $\pm (\mathcal{V}_f-\mathcal{V}_g)\leq \vert \mathcal{V}_f-\mathcal{V}_g\vert \leq \mathcal{V}_{f\pm g}\leq \mathcal{V}_f+\mathcal{V}_g.$ Putting $z=y,$ we immediately get (1). By Theorem \ref{13}(1), $\mathcal{V}_f$ and $\mathcal{V}_g$ are monotonically increasing functions such that $\mathcal{V}_f(x)=0$ and $\mathcal{V}_g(x)=0.$ Using Theorem \ref{13}(3) and replacing $f$ and $g$ by $\mathcal{V}_f$ and $\mathcal{V}_g$ respectively, we also get (2).
\end{proof}

Next, we show that $\mathcal{BV}$ forms an ordered normed space under the order structure of $\mathbb{Y}.$ 

\begin{theorem}\label{7}
Let $\mathbb{Y}$ be a vector lattice which is dedekind complete and $\mathcal{BV}^+=\lbrace f \in \mathcal{BV}:f([x,y])\subseteq \mathbb{Y}^+\rbrace.$ Put $f_0=e.$ Then 
\begin{enumerate}
\item[(1)] $(\mathcal{BV},\mathcal{BV}^+)$ forms an ordered space. 
\item[(2)] $(\mathcal{BV},\mathcal{BV}^+,f_0)$ forms an order unit space. 
\end{enumerate}
Moreover, for each $z\in [x,y]$ and $f\in \mathcal{BV},$ we write $\vert f\vert(z)=\vert f(z)\vert$ and define $\vert \cdot \vert:\mathcal{BV} \to \mathcal{BV}^+$ given by $f\mapsto \vert f\vert.$ Then
\begin{enumerate}
\item[(3)] $(\mathcal{BV},\mathcal{BV}^+,\vert \cdot \vert)$ forms an absolutely ordered space.
\end{enumerate}
Moreover, if $\mathbb{Y}$ also forms an $AM$-space, then 
\begin{enumerate}
\item[(4)] $(\mathcal{BV},\mathcal{BV}^+,f_0,\vert \cdot \vert)$ also forms an $AM$-space. In fact, in this case $\perp = \perp_\infty^a$ holds on $\mathcal{BV}^+$ so that it becomes an absolute order unit space.
\end{enumerate}
\end{theorem}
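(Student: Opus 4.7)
The unifying strategy is to inherit every piece of structure on $\mathcal{BV}$ pointwise from $\mathbb{Y}$: operations and order are pointwise, $\mathcal{BV}^+$ consists of pointwise-positive functions, $|f|(z) := |f(z)|$, and the order unit norm works out to be the sup norm $\|f\|_{\mathcal{BV}} = \sup_{z \in [x,y]} \|f(z)\|_{\mathbb{Y}}$. Under this recipe, each axiom reduces to its counterpart in $\mathbb{Y}$ plus a check, supplied by Theorem \ref{6}, that the relevant functions are still in $\mathcal{BV}$.

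For (1), I would verify that $\mathcal{BV}^+$ is closed under addition and nonnegative scaling using Theorem \ref{6}(2)(3) together with pointwise positivity, and that it is proper since $\mathbb{Y}^+$ is. For (2), Theorem \ref{6}(1) yields a bound $|f(z)| \le Me$ for every $z$, hence $Mf_0\pm f \in \mathcal{BV}^+$, so $f_0$ is an order unit; the Archimedean property transfers pointwise, and the induced norm is the sup norm above. For (3), I would put $|f|(z):=|f(z)|$, observe $|f|\in\mathcal{BV}^+$ by Theorem \ref{6}(4), and verify axioms (a)--(e) of Definition \ref{30} pointwise from the corresponding axioms in $\mathbb{Y}$.

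For (4), assume $\mathbb{Y}$ is an AM-space. The triangle inequality $|a+b|\le|a|+|b|$ in $\mathbb{Y}$ transfers pointwise to $\mathcal{BV}$, so Theorem \ref{2} makes $\mathcal{BV}$ a vector lattice with $(f\vee g)(z)=f(z)\vee g(z)$. The two AM-norm conditions---monotonicity in $|\cdot|$ and the join identity on positives---follow from their $\mathbb{Y}$-analogues by taking $\sup_z$, and norm-closedness of $\mathcal{BV}^+$ is likewise pointwise. What remains is $\perp=\perp^a_\infty$ on $\mathcal{BV}^+$. The forward inclusion is immediate: $f\perp g$ gives $f(z)\perp g(z)$ for every $z$, so for $0\le f_1\le f$, $0\le g_1\le g$ the identity $\|\alpha f_1(z)+\beta g_1(z)\|_{\mathbb{Y}}=\max\{\|\alpha f_1(z)\|,\|\beta g_1(z)\|\}$ holds pointwise and persists under $\sup_z$.

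The reverse inclusion is the main obstacle. Given $f\perp^a_\infty g$, for a fixed $z_0\in[x,y]$ and arbitrary $h_1\in[0,f(z_0)]$, $h_2\in[0,g(z_0)]$, I would use the \emph{spike} functions $\tilde h_i(z):=h_i$ if $z=z_0$ and $0$ otherwise. Each is dominated by $f$ or $g$ respectively and lies in $\mathcal{BV}^+$ (total variation at most $2|h_i|$, whose supremum exists by Dedekind completeness of $\mathbb{Y}$). Applying $\perp^a_\infty$ to $\tilde h_1,\tilde h_2$ and noting that all sup-norm quantities reduce to the $\mathbb{Y}$-norms of the values at $z_0$ yields $\|\alpha h_1+\beta h_2\|_{\mathbb{Y}}=\max\{\|\alpha h_1\|,\|\beta h_2\|\}$; since $h_1,h_2$ range over $[0,f(z_0)]$ and $[0,g(z_0)]$, this gives $f(z_0)\perp^a_\infty g(z_0)$ in $\mathbb{Y}$, hence $f(z_0)\perp g(z_0)$. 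As $z_0$ was arbitrary, $f\perp g$ in $\mathcal{BV}$. The spike construction is the crucial device: it is precisely what Dedekind completeness of $\mathbb{Y}$ enables, and it lets one convert a global norm identity back to pointwise orthogonality.
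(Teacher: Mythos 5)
Your proposal is correct, and for parts (1)--(3) and the $AM$-norm verification in (4) it follows essentially the same pointwise route as the paper (cone axioms and order unit from Theorem \ref{6}, the axioms of Definition \ref{30} checked valuewise, the order unit norm identified with $\sup_{z}\Vert f(z)\Vert$). Where you genuinely diverge is the final claim $\perp=\perp^a_\infty$ on $\mathcal{BV}^+$: the paper disposes of it in one line by citing Kakutani's representation theorem, concluding $\mathcal{BV}\cong C(K,\mathbb{R})$ and hence that $\mathcal{BV}$ is an absolute order unit space, whereas you prove both inclusions directly. Your forward inclusion (pointwise orthogonality passes through the supremum of norms) is routine, but the reverse inclusion via spike functions is a genuinely different and more self-contained device: the functions $\tilde h_i$ supported at a single point $z_0$ do lie in $\mathcal{BV}^+$ (total variation $2h_i$, or $h_i$ at an endpoint), are dominated by $f$ and $g$ respectively, and their sup norms collapse to the $\mathbb{Y}$-norms of the values at $z_0$, so $f\perp^a_\infty g$ in $\mathcal{BV}$ localizes to $f(z_0)\perp^a_\infty g(z_0)$ in $\mathbb{Y}$; you then need, and implicitly use, that $\mathbb{Y}$ itself satisfies $\perp^a_\infty\subseteq\perp$ (which holds under the standing hypothesis that $\mathbb{Y}$ is a unital $AM$-space, equivalently an absolute order unit space) to conclude $f(z_0)\perp g(z_0)$. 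The trade-off: the paper's appeal to Kakutani is shorter but leans on an external representation theorem (and still tacitly needs that $C(K)$ is an absolute order unit space), while your argument is longer but elementary, stays inside the ordered-space framework, and makes visible exactly where Dedekind completeness and the structure of $\mathbb{Y}$ are used. Both are valid proofs.
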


\begin{proof}
\begin{enumerate}
\item[(1)] By Theorem \ref{6}(2) and (3), we get that $\mathcal{BV}$ is a vector space. Next, let $f,g \in \mathcal{BV}^+.$ Then $(f+g)[x,y]=f([x,y])+g([x,y]) \subseteq \mathbb{Y}^+$ so that $f+g \in \mathcal{BV}^+.$ Thus $(\mathcal{BV},\mathcal{BV}^+)$ forms an ordered space. 
\item[(2)] let $f\in \mathcal{BV}.$ By Theorem \ref{6}(1), there exists $w\in \mathbb{Y}^+$ such that $\pm f(z)\leq \vert f(z)\vert \leq w.$ Since $w\leq \Vert w \Vert e,$ we get that $\Vert w\Vert e\pm f(z)\geq 0$ for all $z\in [x,y].$ Then $(\Vert w\Vert f_0 \pm f)([x,y]) \subseteq \mathbb{Y}^+.$ Thus $f_0=e$ is order unit for $(\mathcal{BV},\mathcal{BV}^+).$ 

Next, assume that $\pm f \in \mathcal{BV}^+.$ Then $\pm f(z)\in \mathbb{Y}^+$ for all $z\in [x,y].$ As $\mathbb{Y}^+$ is proper, we get that $f(z)=0$ for every $z\in [x,y].$ Thus $f=0$ so that $\mathcal{BV}^+$ is proper.

Finally, assume that $\epsilon f_0+f \in \mathcal{BV}^+$ for all $\epsilon > 0.$ For each fixed $z\in [x,y],$ we have $\epsilon e+f(z) \in \mathbb{Y}^+$ for all $\epsilon > 0.$ Since $\mathbb{Y}^+$ is Archimedean, we get that $f(z)\in \mathbb{Y}^+$ for every $z\in [x,y].$ Thus $f \in \mathcal{BV}^+$ so that $\mathcal{BV}^+$ is Archimedean. Hence $(\mathcal{BV},\mathcal{BV}^+,f_0)$ forms an order unit space. 

Since $\mathcal{BV}^+$ is proper and Archimedean, we get that the order unit $f_0$ defines a norm on $\mathcal{BV}$ in the following way: 
\begin{eqnarray*}
\Vert f\Vert_0 &=& \inf \lbrace \epsilon > 0:\epsilon f_0\pm f\in \mathcal{BV}^+ \rbrace \\
&=& \inf \lbrace \epsilon > 0:\epsilon e\pm f\in \mathcal{BV}^+ \rbrace \\
&=& \inf \lbrace \epsilon > 0:\epsilon e\pm f(z)\in \mathbb{Y}^+~\textrm{for all $z\in [x,y]$}\rbrace \\
&=& \inf \lbrace \epsilon > 0:\epsilon \geq \Vert f(z)\Vert ~\textrm{for all $z\in [x,y]$}\rbrace \\
&=& \inf \lbrace \epsilon > 0:\epsilon \geq \displaystyle \max_{z\in [x,y]} \Vert f(z)\Vert \rbrace \\
&=& \inf \lbrace \epsilon > 0:\epsilon \geq \displaystyle \Vert f\Vert_\infty \rbrace \\
&=& \Vert f\Vert_\infty.
\end{eqnarray*}

\item[(3)] By Theorem \ref{6}(4), the map $f\mapsto \vert f\vert$ is well defined from $\mathcal{BV}$ to $\mathcal{BV}^+.$ Let $f,g,h\in \mathcal{BV}$ and $z\in [x,y].$ Then
\begin{enumerate}
\item[(a)] Let $f\in \mathcal{BV}^+.$ Then $f(z)\in \mathbb{Y}^+$ so that $\vert f(z)\vert = f(z).$ Thus $\vert f\vert = f.$ 
\item[(b)] $\vert f\vert(z) \pm f(z)=\vert f(z)\vert\pm f(z)\in \mathbb{Y}^+,$ we get that $\vert f\vert \pm f \in \mathcal{BV}^+.$ 
\item[(c)] $\vert \alpha f\vert (z)=\vert \alpha f(z)\vert=\vert \alpha \vert \vert f(z)\vert=\vert \alpha \vert \vert f\vert(z),$ we get that $\vert \alpha f\vert=\vert \alpha\vert \vert f\vert.$
\item[(d)] Let $f,g,h\in \mathcal{BV}^+$ such that $\vert f-g\vert=f+g$ and $g-h \in \mathcal{BV}^+.$ Then $\vert f(z)-g(z)\vert=f(z)+g(z)$ and $0\leq h(z)\leq g(z).$ Thus $\vert f(z)-h(z)\vert=f(z)+h(z)$ so that $\vert f-h\vert=f+h.$
\item[(e)] Let $f,g,h\in \mathcal{BV}^+$ such that $\vert f-g\vert=f+g$ and $\vert f-h\vert=f+h.$ Then $\vert f(z)-g(z)\vert=f(z)+g(z)$ and $\vert f(z)-h(z)\vert=f(z)+h(z).$ Thus $\vert f(z)-\vert g(z)\pm h(z)\vert \vert=f(z)+\vert g(z)\pm h(z)\vert$ so that $\vert f-\vert g\pm h\vert\vert = f+\vert g\pm h\vert.$
\end{enumerate}

Thus $(\mathcal{BV},\mathcal{BV}^+,\vert \cdot \vert)$ forms an absolutely ordered space.

\item[(4)] Finally assume that $\mathbb{Y}$ is an $AM$-space. Let $f$ and $g\in \mathcal{BV}.$ Observe that $\Vert \vert f\vert \Vert_0=\Vert f\Vert_0.$ Without loss of generality, assume that $f$ and $g \in \mathcal{BV}^+.$ If $f\leq g,$ then $f(z)\leq g(z)\leq \Vert g\Vert_\infty e$ so that $\Vert f\Vert_0 \leq \Vert g\Vert_0.$ Next, we also have the following:
\begin{eqnarray*}
\Vert f \dot{\vee} g\Vert_0 &=& \Vert f \dot{\vee} g\Vert_\infty \\
&=& \displaystyle \max_{z\in [x,y]} \Vert (f \dot{\vee} g)(z)\Vert \\
&=& \displaystyle \max_{z\in [x,y]} \Vert f(z) \dot{\vee} g(z)\Vert \\
&\leq & \displaystyle \max_{z\in [x,y]} \lbrace \max \left \lbrace \Vert f(z)\Vert, \Vert g(z)\Vert \rbrace \right \rbrace \\
&=& \max \lbrace \Vert f\Vert_\infty, \Vert g\Vert_\infty \rbrace \\
&=& \max \lbrace \Vert f\Vert_0, \Vert g\Vert_0 \rbrace.
\end{eqnarray*}
Thus $\mathcal{BV}$ also forms $AM$-space. By Kakutani Theorem (see \cite{Kak}), we conclude that $\mathcal{BV}\cong C(K,\mathbb{R})$ for some compact Hausdorff space $K.$ Hence $\mathcal{BV}$ forms an absolute order unit space.
\end{enumerate}
\end{proof}

In the next result, we induce a new order structure on $\mathcal{BV}.$ 

\begin{theorem}\label{12}
Let $\mathbb{Y}$ be a vector lattice which is dedekind complete. Put $\mathcal{BV}_0^+=\lbrace f\in \mathcal{BV}^+: f~\textrm{is monotonically increasing} \rbrace.$ Then 
\begin{enumerate}
\item[(1)] $(\mathcal{BV},\mathcal{BV}_0^+)$ forms an ordered space. 
\end{enumerate}
For $f$ and $g\in \mathcal{BV},$ by $f\leq_0 g,$ we mean that $g-f\in \mathcal{BV}_0^+$ and we also define $\vert \cdot \vert_{\mathcal{V}}:\mathcal{BV} \to \mathcal{BV}_0^+$ given by $f\mapsto \vert f(x)\vert + \mathcal{V}_f.$ Then
\begin{enumerate}
\item[(2)] $\vert f\vert_\mathcal{V}=f$ for every $f\in \mathcal{BV}_0^+.$ 
\item[(3)] $\vert f\vert_\mathcal{V}\pm f \in \mathcal{BV}_0^+$ for every $f\in \mathcal{BV}.$
\item[(4)] $\vert \alpha f\vert_\mathcal{V}=\vert \alpha\vert \vert f\vert_\mathcal{V}$ for every $f\in \mathcal{BV}$ and $\alpha \in \mathbb{R}.$
\item[(5)] For $f,g$ and $h\in \mathcal{BV}_0^+$ such that $\vert f-g\vert_\mathcal{V}=f+g$ and $h-g \in \mathcal{BV}_0^+,$ we have $\vert f-h\vert_\mathcal{V}=f+h.$
\item[(6)] $\vert f\pm g\vert_\mathcal{V}\leq \vert f\vert_\mathcal{V}+\vert g\vert_\mathcal{V}$ for every pair $f$ and $g\in \mathcal{BV}.$
\end{enumerate}
For every pair $f$ and $g\in \mathcal{BV},$ we write: $f \dot{\vee} g:=\frac{1}{2}(f+g+\vert f-g\vert_\mathcal{V})$ and $f \dot{\wedge} g:=-\lbrace (-f)\dot{\vee}(-g)\rbrace=\frac{1}{2}(f+g-\vert f-g\vert_\mathcal{V}).$ Then 
\begin{enumerate}
\item[(7)] $f \dot{\wedge} g\leq_0 f,g\leq_0 f \dot{\vee} g.$ 
\end{enumerate}
\end{theorem}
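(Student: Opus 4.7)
The plan is to reduce all four inequalities in (7) to a single application of property (3). Recall that $f \leq_0 h$ means $h - f \in \mathcal{BV}_0^+$, and $\mathcal{BV}_0^+$ is a cone (sums and nonnegative scalar multiples of functions that are both monotonically increasing and $\mathbb{Y}^+$-valued retain both properties). Thus it suffices, for each of the four required inequalities, to rewrite the relevant difference as a nonnegative scalar multiple of one of the two expressions $|f-g|_{\mathcal V}\pm(f-g)$, each of which lies in $\mathcal{BV}_0^+$ by (3) applied to the element $h:=f-g\in\mathcal{BV}$.

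The computation to do first is a direct algebraic simplification from the definitions of $\dot{\vee}$ and $\dot{\wedge}$:
\[
(f\dot{\vee} g) - f \;=\; \tfrac{1}{2}\bigl(|f-g|_{\mathcal V}-(f-g)\bigr),\qquad (f\dot{\vee} g) - g \;=\; \tfrac{1}{2}\bigl(|f-g|_{\mathcal V}+(f-g)\bigr),
\]
\[
f - (f\dot{\wedge} g) \;=\; \tfrac{1}{2}\bigl(|f-g|_{\mathcal V}+(f-g)\bigr),\qquad g - (f\dot{\wedge} g) \;=\; \tfrac{1}{2}\bigl(|f-g|_{\mathcal V}-(f-g)\bigr).
\]
By (3) with $h=f-g$, both $|f-g|_{\mathcal V}+(f-g)$ and $|f-g|_{\mathcal V}-(f-g)$ lie in $\mathcal{BV}_0^+$. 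Multiplying by the positive scalar $\tfrac{1}{2}$ and using that $\mathcal{BV}_0^+$ is a cone then gives each of $(f\dot{\vee} g)-f,\ (f\dot{\vee} g)-g,\ f-(f\dot{\wedge} g),\ g-(f\dot{\wedge} g)\in\mathcal{BV}_0^+$, which is exactly $f\dot{\wedge} g\leq_0 f,g\leq_0 f\dot{\vee} g$.

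There is essentially no hard step here, once (3) is in hand: the argument is a bookkeeping exercise after (2)--(6) have been established. The only small point worth flagging is that one must ensure $f\dot{\vee} g$ and $f\dot{\wedge} g$ actually belong to $\mathcal{BV}$ so that $\leq_0$ makes sense, but this is immediate from Theorem \ref{6}(2)--(3) together with the fact that $|\cdot|_{\mathcal V}$ maps $\mathcal{BV}$ into $\mathcal{BV}_0^+\subseteq\mathcal{BV}$. Thus the real content of the claim is already carried by property (3); statement (7) is its translation into the lattice-style language.
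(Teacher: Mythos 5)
Your argument for part (7) is correct and is essentially the paper's own: the paper likewise observes that $f\dot{\vee}g-f=\tfrac{1}{2}\bigl(\vert f-g\vert_{\mathcal{V}}-(f-g)\bigr)$ and $f-f\dot{\wedge}g=\tfrac{1}{2}\bigl(\vert f-g\vert_{\mathcal{V}}+(f-g)\bigr)$ and invokes (3); the only cosmetic difference is that the paper obtains the two inequalities involving $g$ from the symmetry $f\dot{\vee}g=g\dot{\vee}f$ and $f\dot{\wedge}g=g\dot{\wedge}f$ rather than computing all four differences. Your side remark that $f\dot{\vee}g$ and $f\dot{\wedge}g$ lie in $\mathcal{BV}$ is a reasonable point to flag and is handled correctly.

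The genuine gap is that the statement to be proved consists of assertions (1) through (7), and your proposal proves only (7), explicitly taking (1)--(6) ``as established.'' Those six parts are where essentially all of the work in the paper's proof resides, and none of them is a formality: (2) requires Proposition \ref{1} to identify $\mathcal{V}_f(z)$ with $f(z)-f(x)$ for monotone increasing $f$; (3) --- the very property your argument for (7) rests on --- requires both the positivity and the monotonicity of $\mathcal{V}_f^{\pm}$ from Theorem \ref{13}(4)--(5), via the identity $\vert f\vert_{\mathcal{V}}(z)\pm f(z)=2\mathcal{V}_f^{\pm}(z)+(\vert f(x)\vert\pm f(x))$; (5) needs a sandwich argument using Corollary \ref{14} to force $\mathcal{V}_{f-h}=\mathcal{V}_f+\mathcal{V}_h$; and (6) again rests on Corollary \ref{14} together with Theorem \ref{2}. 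Even the well-definedness of $\vert\cdot\vert_{\mathcal{V}}$ as a map into $\mathcal{BV}_0^+$ (i.e.\ that $\vert f(x)\vert+\mathcal{V}_f$ is a monotonically increasing, positive function of bounded variation) has to be justified from Theorem \ref{13}(1). As submitted, the proposal is therefore an essentially complete and paper-faithful proof of one seventh of the theorem, and an appeal to unproved claims for the rest; to count as a proof of the stated theorem it must supply arguments for (1)--(6) as well.
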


\begin{proof}
\begin{enumerate}
\item[(1)] Let $f,g \in \mathcal{BV}_0^+.$ Then $f+g$ is also monotonically increasing and $f+g\in \mathcal{BV}^+$ so that $f+g \in \mathcal{BV}_0^+.$ Thus $(\mathcal{BV},\mathcal{BV}_0^+)$ forms an ordered space. 
\end{enumerate}

By Theorem \ref{13}(1), the map $f\mapsto \vert f(x)\vert + \mathcal{V}_f$ is well defined from $\mathcal{BV}$ to $\mathcal{BV}_0^+.$

\begin{enumerate}
\item[(2)] Let $f\in \mathcal{BV}_0^+.$ Then $f$ is monotonically increasing and $\vert f(x)\vert=f(x).$ By Proposition \ref{1}, we get that $\mathcal{V}_f(z)=\mathcal{V}(f,x,z)=f(z)-f(x).$ Thus $\vert f \vert_{\mathcal{V}}=\vert f(x)\vert +\mathcal{V}_f=f.$ 
\item[(3)] By Theorem \ref{13}(2) and (4), we get that 
\begin{eqnarray*}
\vert f\vert_\mathcal{V}(z)\pm f(z) &=& \vert f(x)\vert + \mathcal{V}_f(z) \pm f(z) \\
&=& \mathcal{V}_f(z) \pm f(z) \mp f(x)+\vert f(x)\vert \pm f(x) \\
&=& (\mathcal{V}_f(z) \pm (f(z)-f(x))) + (\vert f(x)\vert \pm f(x))\\ 
&=& 2\mathcal{V}_f^\pm(z)+(\vert f(x)\vert \pm f(x))\\
&\geq & 0
\end{eqnarray*}
for every $f\in \mathcal{BV}$ and $z\in [x,y].$ By Theorem \ref{13}(5), we conclude that $\vert f\vert_{\mathcal{V}}\pm f \in \mathcal{BV}_0^+$ for all $f \in \mathcal{BV}.$
\item[(4)] By Theorem \ref{6}(2), we have
\begin{eqnarray*}
\vert \alpha f\vert_\mathcal{V} &=& \vert \alpha f(x)\vert + \mathcal{V}(\alpha f) \\
&=& \vert \alpha \vert \vert f(x)\vert +\vert \alpha\vert \mathcal{V}(f) \\
&=& \vert \alpha\vert (\vert f(x)\vert + \mathcal{V}(f)) \\
&=& \vert \alpha \vert \vert f\vert_{\mathcal{V}}
\end{eqnarray*}
for every $f\in \mathcal{BV}$ and $\alpha \in \mathbb{R}.$
\end{enumerate}

Next, let $f,g$ and $h\in \mathcal{BV}_0^+.$

\begin{enumerate}
\item[(5)] Assume that $g-h \in \mathcal{BV}_0^+$ and $\vert f-g\vert_{\mathcal{V}}=f+g.$ Then $\vert f(x)-g(x)\vert + \mathcal{V}_{f-g}=f+g.$ Since $\mathcal{V}_f(x)=0,$ we have $\vert f(x)-g(x)\vert=f(x)+g(x).$ By Proposition \ref{1}, we get that $\mathcal{V}_{f-g}=\mathcal{V}_f+\mathcal{V}_g.$  As $g-h\in \mathcal{BV}_0^+,$ again by Proposition \ref{1}, we also get that $\mathcal{V}_{g-h}=(g-h)-(g(x)-h(x))=\mathcal{V}_g-\mathcal{V}_h.$ Now, by Corollary \ref{14}, it turns out that $\mathcal{V}_f + \mathcal{V}_h \geq \mathcal{V}_{f-h} \geq  \mathcal{V}_{f-g}-\mathcal{V}_{g-h} = \mathcal{V}_f+\mathcal{V}_h.$ Thus $\mathcal{V}_{f-h}=\mathcal{V}_f+\mathcal{V}_h.$ For $0\leq h(x)\leq g(x),$ we also have $\vert f(x)-h(x)\vert=f(x)+h(x).$ Finally, we conclude that $\vert f-h\vert_{\mathcal{V}}=f+h.$
\item[(6)] Again by Corollary \ref{14}, we get that
\begin{eqnarray*}
\vert f\pm g\vert_\mathcal{V} &=& \vert f(x)\pm g(x)\vert + \mathcal{V}_{f\pm g}\\
&\leq & \vert f(x)\vert + \vert g(x)\vert +\mathcal{V}_f+\mathcal{V}_g\\
&=& \vert f\vert_\mathcal{V}+\vert g\vert_\mathcal{V}.
\end{eqnarray*}
\item[(7)] By (3), we have $f\dot{\vee}g-f=\dfrac{1}{2}(\vert f-g\vert_\mathcal{V}-(f-g))\in \mathcal{BV}_0^+$ and $f-f\dot{\wedge}g=\dfrac{1}{2}(\vert f-g\vert_\mathcal{V}+(f-g))\in \mathcal{BV}_0^+$ so that $f\dot{\wedge} g\leq_0 f\leq_0 f \dot{\vee} g.$ Since $f\dot{\vee}g=g\dot{\vee}f$ and $f\dot{\wedge}g = g\dot{\wedge}f,$ we also get that $f\dot{\wedge}g\leq_0 g\leq_0 f \dot{\vee} g.$
\end{enumerate}
\end{proof}

The following result states that under the new ordering $\mathcal{BV}$ also forms an ordered normed space.

\begin{corollary}\label{15}
Given $f\in \mathcal{BV},$ we write: $\Vert f\Vert=\displaystyle \inf_{g\in \mathcal{BV}_0^+} \lbrace \Vert g\Vert_\infty : g\pm f\in \mathcal{BV}_0^+\rbrace.$ Then $(\mathcal{BV},\Vert \cdot \Vert)$ is an ordered normed space.
\end{corollary}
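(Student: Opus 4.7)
The plan is to verify the four norm axioms for $\|\cdot\|$ on $\mathcal{BV}$, using the structural results about $\mathcal{BV}_0^+$ and $|\cdot|_\mathcal{V}$ established in Theorem \ref{12}; the cone structure making $(\mathcal{BV},\mathcal{BV}_0^+)$ an ordered space has already been supplied by Theorem \ref{12}(1), so the sole remaining task is to show that $\|\cdot\|$ is a norm on this ordered space.

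First I would observe that for every $f\in\mathcal{BV}$ the set $S_f:=\{g\in\mathcal{BV}_0^+:g\pm f\in\mathcal{BV}_0^+\}$ is non-empty, since Theorem \ref{12}(3) provides $|f|_\mathcal{V}\in S_f$. Hence $\|f\|$ is well defined and $0\leq\|f\|\leq\||f|_\mathcal{V}\|_\infty$. Absolute homogeneity $\|\alpha f\|=|\alpha|\|f\|$ would then follow from the identity $S_{\alpha f}=|\alpha|\,S_f$ for $\alpha\neq 0$; here I use that $\mathcal{BV}_0^+$ is closed under positive scalar multiplication (Theorem \ref{12}(1)) together with the trivial symmetry that $g\pm f\in\mathcal{BV}_0^+$ says the same thing as $g\pm(-f)\in\mathcal{BV}_0^+$. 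The triangle inequality is straightforward: given $g_1\in S_f$ and $g_2\in S_h$, the sum $g_1+g_2$ lies in $S_{f+h}$ because $(g_1+g_2)\pm(f+h)=(g_1\pm f)+(g_2\pm h)\in\mathcal{BV}_0^+$ by Theorem \ref{12}(1), while $\|g_1+g_2\|_\infty\leq\|g_1\|_\infty+\|g_2\|_\infty$; passing to infima over $g_1,g_2$ gives $\|f+h\|\leq\|f\|+\|h\|$.

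The main obstacle is positive definiteness: if $\|f\|=0$ I must force $f=0$. Choose $g_n\in\mathcal{BV}_0^+$ with $g_n\pm f\in\mathcal{BV}_0^+$ and $\|g_n\|_\infty\to 0$. For each fixed $z\in[x,y]$, the pointwise inequalities $g_n(z)\geq\pm f(z)$ in $\mathbb{Y}$, combined with Theorem \ref{2}(3) applied in the vector lattice $\mathbb{Y}$, yield $0\leq|f(z)|\leq g_n(z)$. Since $\mathbb{Y}$ is an absolute order unit space, its positive cone is normal (for $0\leq a\leq b$ in $\mathbb{Y}$ one has $\|a\|\leq\|b\|$, as $b\leq\|b\|e$ forces $\|b\|e\pm a\in\mathbb{Y}^+$), so $\||f(z)|\|\leq\|g_n(z)\|\leq\|g_n\|_\infty\to 0$; thus $|f(z)|=0$, and hence $f(z)=0$, for every $z\in[x,y]$. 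This shows $f=0$ and, combined with Theorem \ref{12}(1), establishes that $(\mathcal{BV},\mathcal{BV}_0^+,\|\cdot\|)$ is an ordered normed space.
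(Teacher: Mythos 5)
Your proof is correct and follows the route the paper intends: Corollary \ref{15} is stated without proof as a consequence of Theorem \ref{12}, and your argument is precisely the natural elaboration --- non-emptiness of the admissible set via $\vert f\vert_{\mathcal{V}}\pm f\in\mathcal{BV}_0^+$ (Theorem \ref{12}(3)), homogeneity and the triangle inequality from the cone properties in Theorem \ref{12}(1), and positive definiteness via Theorem \ref{2}(3) together with the monotonicity of the order unit norm on $\mathbb{Y}$. The only point worth noting is that your definiteness argument uses the order unit norm on $\mathbb{Y}$, so it implicitly requires $\mathbb{Y}$ to be an order unit space rather than merely a Dedekind complete vector lattice, but this is consistent with the standing hypotheses of the paper (and is needed anyway for $\Vert\cdot\Vert_\infty$ to make sense).
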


Finally, we induce another norm on $\mathcal{BV}$ by new ordering. Under certain condition, this norm turns out to be a complete norm. 

\begin{theorem}\label{3}
Let $\mathbb{Y}$ be an absolute order unit space having vector lattice structure which is dedekind complete. For each $f\in \mathcal{BV},$ we write: $\Vert f\Vert_{\mathcal{BV}}=\Vert \vert f(x)\vert+\mathcal{V}(f)\Vert.$ Then $(\mathcal{BV},\Vert \cdot \Vert_{\mathcal{BV}})$ forms a normed space. Moreover, we have
\begin{enumerate}
\item[(1)] $\Vert f \Vert_\mathcal{BV}= \Vert \vert f(x)\vert + \mathcal{V}_f\Vert_0.$ 
\item[(2)] $\Vert f\Vert_0 \leq \Vert f\Vert_\mathcal{BV}.$
\item[(3)] If $\mathbb{Y}$ is order complete, then $(\mathcal{BV},\Vert \cdot \Vert_{\mathcal{BV}})$ is complete.
\item[(4)] If $\vert f\vert_\mathcal{V} \leq \vert g\vert_\mathcal{V},$ then $\Vert f\Vert_\mathcal{BV}\leq \Vert g\Vert_\mathcal{BV}.$
\item[(5)] $\Vert f \dot{\vee} g\Vert_0 \leq \Vert f\Vert_\mathcal{BV} + \Vert g\Vert_\mathcal{BV}$ for all $f,g\in \mathcal{BV}^+.$
\end{enumerate}
In this case, $(\mathcal{BV},\Vert \cdot \Vert_{\mathcal{BV}})$ forms an ordered normed space.
\end{theorem}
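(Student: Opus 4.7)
First, to see that $\Vert \cdot \Vert_\mathcal{BV}$ is a norm I would verify the three axioms directly. For definiteness, if $\Vert f\Vert_\mathcal{BV} = 0$ then $|f(x)| + \mathcal{V}(f) = 0$ in $\mathbb{Y}$, which forces both $|f(x)| = 0$ (hence $f(x) = 0$ by Definition \ref{30}(b)) and $\mathcal{V}(f) = 0$; Lemma \ref{4} then makes $f$ constantly $0$. Homogeneity is immediate from Theorem \ref{6}(2) and Definition \ref{30}(c), and the triangle inequality combines $|f(x)+g(x)| \leq |f(x)| + |g(x)|$ from Theorem \ref{2}(4) with $\mathcal{V}(f+g) \leq \mathcal{V}(f) + \mathcal{V}(g)$ from Theorem \ref{6}(3), together with monotonicity of the order unit norm on $\mathbb{Y}^+$. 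For (1), I would exploit that $\mathcal{V}_f$ is monotonically increasing with $\mathcal{V}_f(y) = \mathcal{V}(f)$ (Theorem \ref{13}(1) and the definition of $\mathcal{V}_f$), so $|f(x)| + \mathcal{V}_f(z)$ attains its maximum at $z = y$; combined with the identification $\Vert \cdot \Vert_0 = \Vert \cdot \Vert_\infty$ from the proof of Theorem \ref{7}(2), this gives the identity. For (2), Corollary \ref{8} applied on $[x,z]$ (available through Theorem \ref{5}) yields $|f(z)| \leq |f(x)| + \mathcal{V}(f)$, and applying $\Vert \cdot \Vert$ and maximising over $z\in[x,y]$ gives $\Vert f\Vert_0 \leq \Vert f\Vert_\mathcal{BV}$.

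The substantial step is (3). Given a $\Vert \cdot \Vert_\mathcal{BV}$-Cauchy sequence $(f_n)$, part (2) shows it is $\Vert \cdot \Vert_\infty$-Cauchy, so completeness of $\mathbb{Y}$ supplies a uniform limit $f : [x,y] \to \mathbb{Y}$. I would next establish $f \in \mathcal{BV}$: Corollary \ref{14}(1) gives $\Vert \mathcal{V}(f_n) - \mathcal{V}(f_m)\Vert \leq \Vert \mathcal{V}(f_n - f_m)\Vert \leq \Vert f_n - f_m\Vert_\mathcal{BV}$, so $(\mathcal{V}(f_n))$ is Cauchy in $\mathbb{Y}$ with limit $V^*\in \mathbb{Y}^+$. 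For each fixed partition $\mathcal{P}$, the non-expansiveness of $|\cdot|$ on $\mathbb{Y}$ (a consequence of Theorem \ref{2}(4) and $\Vert |a|\Vert = \Vert a\Vert$ in an absolute order unit space) yields $\Sigma_\mathcal{P}[f_n] \to \Sigma_\mathcal{P}[f]$; since $\Sigma_\mathcal{P}[f_n] \leq \mathcal{V}(f_n)$ and $\mathbb{Y}^+$ is closed, $\Sigma_\mathcal{P}[f] \leq V^*$, and Dedekind completeness of $\mathbb{Y}$ then produces $\mathcal{V}(f) \in \mathbb{Y}$. To close the loop, for $\epsilon > 0$ and $n, m \geq N$ with $\Vert f_n - f_m\Vert_\mathcal{BV} < \epsilon$, the positive element $|f_n(x) - f_m(x)| + \Sigma_\mathcal{P}[f_n - f_m] \leq |f_n(x) - f_m(x)| + \mathcal{V}(f_n - f_m)$ has order unit norm $< \epsilon$, so it is dominated by $\epsilon e$; letting $m \to \infty$ (cone closedness plus continuity of $|\cdot|$) gives $|f_n(x) - f(x)| + \Sigma_\mathcal{P}[f_n - f] \leq \epsilon e$, and taking the supremum over $\mathcal{P}$ (Dedekind completeness) gives $|f_n(x) - f(x)| + \mathcal{V}(f_n - f) \leq \epsilon e$, i.e.\ $\Vert f_n - f\Vert_\mathcal{BV} \leq \epsilon$. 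The hard part here is the order-theoretic limit exchange, which genuinely requires both Dedekind completeness of $\mathbb{Y}$ and closedness of $\mathbb{Y}^+$.

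Parts (4) and (5) are comparatively direct. For (4), evaluating the inequality $|f|_\mathcal{V} \leq |g|_\mathcal{V}$ of $\mathcal{BV}$-functions at $z = y$ gives $|f(x)| + \mathcal{V}(f) \leq |g(x)| + \mathcal{V}(g)$ in $\mathbb{Y}^+$, and monotonicity of the order unit norm on $\mathbb{Y}^+$ yields $\Vert f\Vert_\mathcal{BV} \leq \Vert g\Vert_\mathcal{BV}$. For (5), I would expand $f \dot{\vee} g = \frac{1}{2}(f + g + |f - g|_\mathcal{V})$ pointwise and, for $f, g \in \mathcal{BV}^+$, chain the estimates $f(z) \leq f(x) + \mathcal{V}_f(z) = |f|_\mathcal{V}(z)$ (and similarly for $g$), $\mathcal{V}_{f-g} \leq \mathcal{V}_f + \mathcal{V}_g$ (Corollary \ref{14}), and $|f(x)-g(x)| \leq |f(x)| + |g(x)|$ to obtain $(f \dot{\vee} g)(z) \leq |f|_\mathcal{V}(z) + |g|_\mathcal{V}(z)$; taking the order unit norm (monotone on $\mathbb{Y}^+$) and then the maximum over $z$, using monotonicity of $|f|_\mathcal{V}$ and $|g|_\mathcal{V}$, yields the asserted bound by $\Vert f\Vert_\mathcal{BV} + \Vert g\Vert_\mathcal{BV}$. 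The concluding assertion that $(\mathcal{BV}, \Vert \cdot \Vert_\mathcal{BV})$ is an ordered normed space then follows by verifying that $\mathcal{BV}_0^+$ (the cone from Theorem \ref{12}) is $\Vert \cdot \Vert_\mathcal{BV}$-closed: $\Vert \cdot \Vert_\mathcal{BV}$-convergence implies uniform convergence by (2), and both pointwise positivity and pointwise monotonicity pass to the limit thanks to closedness of $\mathbb{Y}^+$.
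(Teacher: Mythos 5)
Your proposal is correct and follows essentially the same route as the paper: the norm axioms via Lemma \ref{4} and Theorems \ref{2} and \ref{6}, part (1) from monotonicity of $\mathcal{V}_f$, part (2) from Theorem \ref{13}(2), part (3) by passing to a limit of a Cauchy sequence and controlling partition sums, and parts (4)--(5) by direct order estimates. If anything, your treatment of (3) is more careful than the paper's, which asserts that Cauchyness yields $\mathcal{V}(f_m)\leq c$ for an arbitrary $c\in\mathbb{Y}^+$ and passes to the limit in partition sums without justifying continuity of $\vert\cdot\vert$ or invoking closedness of $\mathbb{Y}^+$; your version supplies exactly those missing ingredients, and also addresses the final ``ordered normed space'' claim, which the paper leaves unargued.
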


\begin{proof}
Let $f,g\in \mathcal{BV}$ and $\alpha \in \mathbb{R}.$ By Lemma \ref{4}, we have $f=0$ if and only if $f(x)=0$ and $\mathcal{V}(f)=0$ if and only if $\Vert f\Vert_{\mathcal{BV}}=0.$ Next, by Theorems \ref{2} and \ref{6}(3), we get that $\vert f(x)+g(x)\vert+\mathcal{V}(f+g)\leq \vert f(x)\vert + \vert g(x)\vert+\mathcal{V}(f)+\mathcal{V}(g).$ Then $\Vert f+g\Vert_{\mathcal{BV}} \leq \Vert \vert f(x)\vert + \vert g(x)\vert+\mathcal{V}(f)+\mathcal{V}(g)\Vert \leq \Vert \vert f(x)\vert+\mathcal{V}(f)\Vert_{\mathcal{BV}} + \Vert \vert g(x)\vert+\mathcal{V}(g)\Vert = \Vert f\Vert_{\mathcal{BV}} + \Vert g\Vert_{\mathcal{BV}}.$ Finally, by Theorem \ref{6}(2), we conclude that $\vert \alpha f\vert+\mathcal{V}(\alpha f)=\vert \alpha \vert \vert f(x)\vert +\vert \alpha \vert \mathcal{V}(f)=\vert \alpha \vert (\vert f(x)\vert + \mathcal{V}(f))$ so that $\Vert \alpha f \Vert_{\mathcal{BV}} = \vert \alpha\vert \Vert f\Vert_{\mathcal{BV}}.$ Thus $(\mathcal{BV},\Vert \cdot \Vert_{\mathcal{BV}})$ forms a normed space. Now, we prove other properties.
\begin{enumerate}
\item[(1)] By Theorem \ref{13}(1), $\mathcal{V}_f$ is monotonically increasing function. For $z_1,z_2\in [x,y]$ such that $z_1\leq z_2,$ we have $0\leq \vert f(x)\vert+\mathcal{V}_f(z_1)\leq \vert f(x)\vert+\mathcal{V}_f(z_2)\leq \vert f(x)\vert+\mathcal{V}_f(y)=\vert f(x)\vert+\mathcal{V}(f)$ so that $\Vert \vert f(x)\vert + \mathcal{V}_f(z_1)\Vert\leq\Vert \vert f(x)\vert + \mathcal{V}_f(z_2)\Vert \leq \Vert \vert f(x)\vert + \mathcal{V}_f(y)\Vert=\Vert \vert f(x)\vert + \mathcal{V}(f)\Vert.$ Thus $\Vert f \Vert_\mathcal{BV}=\Vert \vert f(x)\vert + \mathcal{V}_f\Vert_0.$
\item[(2)] By Theorem \ref{13}(2), we have $\vert f(z)\vert \leq \vert f(z)-f(x)\vert +\vert f(x)\vert\leq \mathcal{V}_f(z)+\vert f(x)\vert.$ By (1), we get that $\Vert f(z)\Vert \leq \Vert \mathcal{V}_f(z)+\vert f(x)\vert\Vert \leq \Vert f\Vert_\mathcal{BV}.$ Thus $\Vert f\Vert_0 \leq \Vert f\Vert_\mathcal{BV}.$
\item[(3)] Let $\lbrace f_n \rbrace$ be a cauchey sequence in $\mathcal{BV}.$ Then $\vert f_n(x)-f_m(x)\vert \to 0$ and $\mathcal{V}(f_n-f_m)\to 0.$ In this case, $\vert f_n(z)-f_m(z)\vert \to 0$ for every $z\in [x,y].$ Define $f(z)=\displaystyle \lim_{n\to \infty} f_n(z).$ For any partition $\mathcal{P}$ of $[x,y],$ we have $\displaystyle \sum_{i=1}^{n_\mathcal{P}}\vert f_n(x_i)-f_n(x_{i-1})\vert \leq \mathcal{V}(f_n-f_m)+\mathcal{V}(f_m).$ Since $\lbrace f_n \rbrace$ is a cauchey sequence, given any $c\in \mathbb{Y}^+,$ we can find $m_0\in \mathbb{N}$ such that $\mathcal{V}(f_n-f_m)\leq c$ and $\mathcal{V}(f_m)\leq c$ for all $n,m\geq m_0.$ In this case, we have $\displaystyle \sum_{i=1}^{n_\mathcal{P}}\vert f_n(x_i)-f_n(x_{i-1})\vert \leq 2 c$ for all $n\geq m_0.$ Letting $n\to \infty,$ we get that $\displaystyle \sum_{i=1}^{n_\mathcal{P}}\vert f(x_i)-f(x_{i-1})\vert \leq 2c$ so that $f\in \mathcal{BV}.$ Similarily, letting $n\to \infty$ and keeping $m$ fixed in $\displaystyle \sum_{i=1}^{n_\mathcal{P}}\vert (f_n(x_i)-f_m(x_i))-(f_n(x_{i-1})-f_m(x_{i-1}))\vert \leq \mathcal{V}(f_n-f_m)\leq c,$ we conclude that $\displaystyle \sum_{i=1}^{n_\mathcal{P}}\vert (f(x_i)-f_m(x_i))-(f(x_{i-1})-f_m(x_{i-1}))\vert\leq c$ for all $m\geq m_0.$ Thus $f_n \to f$ so that $(\mathcal{BV},\Vert \cdot \Vert_{\mathcal{BV}})$ forms a complete space.
\item[(4)] It is trivial to verify.
\item[(5)] By Corollary \ref{14}, for $f,g\in \mathcal{BV}^+$ and $z\in [x,y],$ we have
\begin{eqnarray*}
\Vert (f\dot{\vee}g)(z)\Vert &=& \frac{1}{2}\Vert f(z)+g(z)+\vert f(x)-g(x)\vert +\mathcal{V}_{f-g}(z)\Vert \\
&\leq & \frac{1}{2}\Vert f(z)+g(z)+\vert f(x)\vert + \vert g(x)\vert +\mathcal{V}_f(z)+\mathcal{V}_g(z)\Vert \\
&=& \frac{1}{2}\Vert f(z)+g(z)+\vert f(x)\vert + \vert g(x)\vert +\mathcal{V}_f(z)+\mathcal{V}_g(z)\Vert \\
&\leq & \frac{1}{2}(\Vert f(z)\Vert +\Vert g(z)\Vert +\Vert \vert f(x)\vert +\mathcal{V}_f(z)\Vert + \Vert \vert g(x)\vert +\mathcal{V}_g(z)\Vert) \\
&\leq & \frac{1}{2}(\Vert f\Vert_0 +\Vert g\Vert_0 +\Vert f\Vert_\mathcal{BV} + \Vert g\Vert_\mathcal{BV})\\
&\leq & \frac{1}{2}(\Vert f\Vert_\mathcal{BV} +\Vert g\Vert_\mathcal{BV} +\Vert f\Vert_\mathcal{BV} + \Vert g\Vert_\mathcal{BV})\\
&=& \Vert f\Vert_\mathcal{BV}+\Vert g\Vert_\mathcal{BV}
\end{eqnarray*}

so that $\Vert f \dot{\vee} g\Vert_0 \leq \Vert f\Vert_\mathcal{BV} + \Vert g\Vert_\mathcal{BV}.$
\end{enumerate}
\end{proof}

\begin{corollary}\label{16}
Given $f\in \mathcal{BV},$ we write: $\Vert f\Vert=\displaystyle \inf_{g\in \mathcal{BV}_0^+} \lbrace \Vert g\Vert_\mathcal{BV} : g\pm f\in \mathcal{BV}_0^+\rbrace.$ Then $(\mathcal{BV},\Vert \cdot \Vert)$ is an ordered normed space.
\end{corollary}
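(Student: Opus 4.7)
The plan is to verify the norm axioms for $\Vert \cdot\Vert$, after which the ordered vector space structure on $\mathcal{BV}$ carried by $\mathcal{BV}_0^+$ (Theorem \ref{12}(1)) automatically promotes $(\mathcal{BV},\Vert \cdot\Vert)$ to an ordered normed space. First I would note that by Theorem \ref{12}(3) the element $g=\vert f\vert_\mathcal{V}$ lies in $\mathcal{BV}_0^+$ and satisfies $g\pm f\in \mathcal{BV}_0^+$, so the defining set is non-empty and $\Vert f\Vert$ is a well-defined non-negative real. Absolute homogeneity is a routine rescaling $g\mapsto g/\vert \alpha\vert$ using that $\mathcal{BV}_0^+$ is a cone. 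The triangle inequality is a standard $\epsilon$-argument: given $\epsilon>0$, pick $g_i\in \mathcal{BV}_0^+$ with $g_i\pm f_i\in \mathcal{BV}_0^+$ and $\Vert g_i\Vert_\mathcal{BV}<\Vert f_i\Vert+\epsilon$; then $g_1+g_2$ witnesses $\Vert f_1+f_2\Vert\le \Vert g_1\Vert_\mathcal{BV}+\Vert g_2\Vert_\mathcal{BV}$ via the subadditivity of $\Vert \cdot\Vert_\mathcal{BV}$ recorded in Theorem \ref{3}.

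The only axiom that requires work is definiteness, and the strategy is to prove the comparison $\Vert f\Vert_\mathcal{BV}\le \Vert f\Vert$; once this is in hand, $\Vert f\Vert=0$ forces $\Vert f\Vert_\mathcal{BV}=0$, and Theorem \ref{3} yields $f=0$. So fix $g\in \mathcal{BV}_0^+$ with $g\pm f\in \mathcal{BV}_0^+$. Pointwise $\pm f(z)\le g(z)$; since $\mathbb{Y}$ is a vector lattice, Theorem \ref{2} gives $\vert f(z)\vert\le g(z)$ for every $z$, and in particular $\vert f(x)\vert\le g(x)$. To compare total variations, I would write $f=\tfrac{1}{2}((g+f)-(g-f))$ and invoke Theorem \ref{6}(3) to obtain $\mathcal{V}(f)\le \tfrac{1}{2}(\mathcal{V}(g+f)+\mathcal{V}(g-f))$. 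Since $g\pm f$ are both monotonically increasing, Proposition \ref{1} evaluates the right-hand side as $\tfrac{1}{2}\bigl((g+f)(y)-(g+f)(x)+(g-f)(y)-(g-f)(x)\bigr)=g(y)-g(x)=\mathcal{V}(g)$. Adding the two estimates yields $\vert f(x)\vert+\mathcal{V}(f)\le g(x)+\mathcal{V}(g)=\vert g(x)\vert+\mathcal{V}(g)$ in $\mathbb{Y}^+$; because the order-unit norm on the absolute order unit space $\mathbb{Y}$ is monotone on $\mathbb{Y}^+$, this gives $\Vert f\Vert_\mathcal{BV}\le \Vert g\Vert_\mathcal{BV}$, and taking the infimum over admissible $g$ completes the sandwich.

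The step I expect to be the main obstacle is precisely the inequality $\mathcal{V}(f)\le \mathcal{V}(g)$. The key observation is that the decomposition $f=\tfrac{1}{2}((g+f)-(g-f))$ uses two \emph{monotonically increasing} dominators, which is exactly what makes Proposition \ref{1} applicable and produces the telescoping identity $\mathcal{V}(g+f)+\mathcal{V}(g-f)=2\mathcal{V}(g)$; without the monotonicity built into $\mathcal{BV}_0^+$, Theorem \ref{6}(3) would only yield the useless bound $\mathcal{V}(f)\le \mathcal{V}(g)+\mathcal{V}(f)$. With definiteness secured, the conclusion that $(\mathcal{BV},\Vert\cdot\Vert)$ is an ordered normed space follows at once from Theorem \ref{12}(1).
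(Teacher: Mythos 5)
The paper states Corollary \ref{16} without any proof at all (as it does Corollary \ref{15}), so there is no ``paper approach'' to compare against; your proposal supplies the missing argument, and it is correct. The routine axioms (non-emptiness of the admissible set via $\vert f\vert_\mathcal{V}$, homogeneity by rescaling, triangle inequality by adding witnesses, all resting on $\mathcal{BV}_0^+$ being a cone and on the properties of $\Vert\cdot\Vert_\mathcal{BV}$ from Theorem \ref{3}) are handled as one would expect. The substantive content is exactly where you put it: definiteness via the comparison $\Vert f\Vert_\mathcal{BV}\le \Vert f\Vert$. Your chain is sound --- $\pm f(z)\le g(z)$ gives $\vert f(x)\vert\le g(x)$ by Theorem \ref{2}(3), and the decomposition $f=\tfrac{1}{2}\bigl((g+f)-(g-f)\bigr)$ into two members of $\mathcal{BV}_0^+$ combined with Theorem \ref{6}(2),(3) and Proposition \ref{1} telescopes to $\mathcal{V}(f)\le g(y)-g(x)=\mathcal{V}(g)$; monotonicity of the order unit norm on $\mathbb{Y}^+$ then gives $\Vert f\Vert_\mathcal{BV}\le\Vert g\Vert_\mathcal{BV}$ for every admissible $g$, and definiteness follows from Theorem \ref{3}. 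Two cosmetic points: the fact that $\vert f\vert_\mathcal{V}\in\mathcal{BV}_0^+$ comes from the well-definedness of the map $\vert\cdot\vert_\mathcal{V}$ established at the start of the proof of Theorem \ref{12} (via Theorem \ref{13}(1)), not from Theorem \ref{12}(3), which only gives $\vert f\vert_\mathcal{V}\pm f\in\mathcal{BV}_0^+$; and the paper never pins down what compatibility ``ordered normed space'' requires beyond a norm on the ordered space $(\mathcal{BV},\mathcal{BV}_0^+)$, so your closing appeal to Theorem \ref{12}(1) matches the paper's own (loose) standard. An equivalent shortcut for the last step would be to note that your two estimates say precisely $\vert f\vert_\mathcal{V}\le \vert g\vert_\mathcal{V}$ pointwise at $z=y$ and then invoke Theorem \ref{3}(4), but that is the same computation in different packaging.
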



\begin{thebibliography}{100}
	
\bibitem{MA71} E. M. Alfsen, Compact Convex sets and Boundary Integrals, Springer-Verlag, Berlin-Heidelberg-New York, 1971. 

\bibitem {CDOB} Charalambos D. Aliprantis and Owen Burkinshaw, Positive operators, Springer, 2006.

\bibitem{B06} B. Blackadar, Operator algebras. Theory of C$^*$-algebras and von Neumann algebras, Springer-Verlag, Berlin- Heidelberg-New York, 2006.	

\bibitem {LSSF} Lamberto Cesari, "Sulle funzioni a variazione limitata", Annali della Scuola Normale Superiore, {\bf 5} (1936), 299–313.

\bibitem {LCLD} Lamberto Cesari, L'opera di Leonida Tonelli e la sua influenza nel pensiero scientifico del secolo, 1986.

\bibitem {JBCF} John B. Conway, Functions of one complex variable, Springer-Verlag, Berlin-Heidelberg-New York, 1973. 

\bibitem{GN} I. Gelfand and M. Neumark, On the imbedding of normed rings into the ring of operators in Hilbert space, Rec. Math. [Mat. Sbornik] N.S., {\bf 54} (1943), 197--213. 

\bibitem {J72} G. Jameson, Ordered linear spaces, Lecture Notes in mathematics, Springer-Verlag, Berlin-Heidelberg-New York, {\bf 141} (1970).

\bibitem {CJFS} Camille Jordan, "Sur la série de Fourier" [On Fourier's series], Comptes rendus hebdomadaires des séances de l'Académie des sciences, {\bf 92} (1881), 228--230. 

\bibitem{RVK} R.\ V.\ Kadison, A representation theory for commutative topological algebra, Mem. Amer. Math. Soc., {\bf 7} (1951). 

\bibitem{Kad51} R. V. Kadison, Order properties of bounded self-adjoint operators, Proc. Amer. Math. Soc., {\bf 2} (1951), 505--510. 

\bibitem{RJ83} R. V. Kadison and J. R. Ringrose, Fundamentals of the theory of operator algebras, Academic Press, Inc., London-New York,  1983. 

\bibitem{Kak} S. Kakutani, Concrete representation of abstract $M$-spaces, Ann. of Math., {\bf 42} (1941), 994--1024.

\bibitem{K10} A. K. Karn, A p-theory of ordered normed spaces, Positivity, {\bf 14} (2010), 441--458. 

\bibitem{K14} A. K. Karn, Orthogonality in $l_p$-spaces and its bearing on ordered Banach spaces, Positivity, {\bf 18} (2014), 223--234. 

\bibitem{K16} A. K. Karn, Orthogonality in C$^*$-algebras, Positivity, {\bf 20} (2016), 607--620.

\bibitem{K18} A. K. Karn, Algebraic orthogonality and commuting projections in operator algebras, Acta Sci. Math. (Szeged), {\bf 84} (2018), 323--353. 

\bibitem{K19} A. K. Karn and A. Kumar, Isometries of absolute order unit spaces, Positivity, {\bf 24} (2020), 1263--1277.

\bibitem{GKP} G. K. Pedersen, C$^*$-algebras and their automorphism groups, Academic Press, Inc., London-New York, 1979.

\bibitem{HHS74} H. H. Schaefer, Banach lattices and positive operators, Springer-Verlag, Berlin-Heidelberg-New York, 1974.
	
\bibitem {RLWA} Richard L. Wheeden and Antoni Zygmund, Measure and Integral: An Introduction to Real Analysis, CRC Pure and Applied Mathematics, 1977.	
	
\bibitem {WN73} Y. C. Wong and K. F. Ng, Partially ordered topological vector spaces, Oxford Mathematical Monographs, Clarendon Press, Oxford, 1973.	
 
\end{thebibliography}
\end{document}